\let\pa\partial
\let\na\nabla
\let\eps\varepsilon
\newcommand{\N}{{\mathbb N}}
\newcommand{\R}{{\mathbb R}}
\newcommand{\diver}{\operatorname{div}}
\newtheorem{theorem}{Theorem}
\newtheorem{lemma}[theorem]{Lemma}
\newtheorem{proposition}[theorem]{Proposition}
\begin{document}

\title[Fluid mixtures driven by a pressure gradient]{Analysis of cross-diffusion 
systems for fluid mixtures driven by a pressure gradient}

\author[P.-E. Druet]{Pierre-Etienne Druet}
\address{Weierstrass Institute, Mohrenstr. 39, 10117 Berlin, Germany}
\email{pierre-etienne.druet@wias-berlin.de}

\author[A. J\"ungel]{Ansgar J\"ungel}
\address{Institute for Analysis and Scientific Computing, Vienna University of
	Technology, Wiedner Hauptstra\ss e 8--10, 1040 Wien, Austria}
\email{juengel@tuwien.ac.at}

\date{\today}

\thanks{The first author was supported by the grant D1117/1-1 of the 
German Science Foundation (DFG). The second author acknowledges
partial support from the Austrian Science Fund (FWF), grants P30000, P33010,
W1245, and F65.
}

\begin{abstract}
The convective transport in a multicomponent isothermal compressible fluid
subject to the mass continuity equations is considered. The velocity is
proportional to the negative pressure gradient, according to Darcy's law, and
the pressure is defined by a state equation imposed by the volume extension of 
the mixture. These model assumptions lead to a parabolic-hyperbolic system 
for the mass densities.
The global-in-time existence of classical and weak solutions is proved in a 
bounded domain with no-penetration boundary conditions. The idea is to 
decompose the system into a porous-medium-type equation for the volume extension
and transport equations for the modified number fractions. The existence proof
is based on parabolic regularity theory, the theory of renormalized solutions,
and an approximation of the velocity field.
\end{abstract}

% \paragraph{Keywords:}
\keywords{Parabolic-hyperbolic system, cross diffusion, fluid mixture, 
existence of solutions, transport equation.}

% \paragraph{AMS classification:}
\subjclass[2000]{35K45, 35L65, 35Q79, 35M31, 35Q92, 92C17.}

\maketitle

%%%%%%%%%%%%%%%%%%%%%%%%%%%%%%%%%%%%%%%%%%%%%%%%%%%%%%%%%%%%%%%%%%%%%%%%%%%%%%%

\section{Introduction}

Multicomponent fluids are found in nature and many engineering applications,
for instance, in combustion, chemical reactors, tumor growth, and gas mixtures. 
For efficient modeling and simulations of these applications, we need 
to understand the mathematical structure of the
governing partial differential equations and to determine the properties of
their solutions. In this paper, we analyze the mass continuity equations for
the partial mass densities, subject to Darcy's law for the fluid velocity and
a pressure related to the volume extension of the mixture, and we prove the
global-in-time existence of smooth and weak solutions. The main novelty is 
the general framework of our pressure model.

\subsection{Model equations}

We consider the evolution of a fluid mixture of $N$ substances in a bounded
container $\Omega\subset\R^3$. We assume that the system is in the isothermal
state. The mass densities $\rho_1,\ldots,\rho_N$
of the species are given by the conservation equations
\begin{equation}\label{1.cont}
  \pa_t\rho_i + \diver(\rho_i v) = 0 \quad\mbox{in }\Omega\times(0,T),\ i=1,\ldots,N.
\end{equation}
We suppose that the fluid is driven only by the thermodynamic pressure. Then 
the barycentric velocity $v$ is determined by Darcy's law
\begin{equation}\label{1.v}
  v = -\kappa \na p\quad\mbox{in }\Omega\times(0,T),
\end{equation}
where the porosity coefficient $\kappa$ generally depends on the medium $\Omega$ and
the fluid. The pressure is related to the volume extension $\Lambda$ 
of the mixture via
\begin{equation}\label{1.p}
  p = G(\Lambda(\rho)), \quad \rho=(\rho_1,\ldots,\rho_N),
\end{equation}
where $G$ is an increasing scalar function and $\Lambda$ is positively homogeneous
(of degree one). Typical choices are $G(s)=c_0s^\alpha$ with $\alpha>1$ and 
$\Lambda(\rho)=\sum_{i=1}^Nc_i\rho_i$, where $c_0,\ldots,c_N>0$.
We refer to Section \ref{sec.model} for the modeling details.

Equations \eqref{1.cont}--\eqref{1.p} can be formulated as a cross-diffusion system
with entropy structure. Indeed, we show in Section \ref{sec.model} that there exists
a free energy (or entropy) $h(\rho)$ such that $\na p=\sum_{j=1}^N\rho_j\na\mu_j$, 
where $\mu_j=\pa h/\pa\rho_j$ are the chemical potentials (or entropy variables).
Then system \eqref{1.cont}--\eqref{1.p} is equivalent to the cross-diffusion system
\begin{equation}\label{1.rhomu}
  \pa_t\rho_i - \diver\sum_{j=1}^N M_{ij}(\rho)\na\mu_j = 0, \quad i=1,\ldots,N,
\end{equation}
where the kinetic (or mobility) matrix 
$M_{ij}(\rho)=\kappa\rho_i\rho_j$ has rank one only. 
Thus, we expect that there is only one ``parabolic direction'' and
$N-1$ ``hyperbolic directions''. The challenge is to deal with such cross-diffusion
systems possessing {\em incomplete diffusion}. It is sufficient to
impose one boundary condition, and we choose in this paper the no-penetration 
(and initial) conditions
\begin{equation}\label{1.bic}
  v\cdot\nu = 0 \quad\mbox{on }\pa\Omega\times(0,T), \quad
	\rho_i(0) = \rho_i^0\quad\mbox{in }\Omega,\ i=1,\ldots,N,
\end{equation}
where $\nu$ is the exterior unit normal vector to $\pa\Omega$. Alternatively,
in the case that there are free inflows and outflows, we may use the pressure
boundary condition
$$
  p = p_{0}\quad\mbox{on }\pa\Omega\times(0,T).
$$
In practical situations, a mix of free flow and impermeable portions of the boundary 
is often realistic. This leads to more technical problems in the mathematical 
analysis, so we consider mainly the boundary conditions in \eqref{1.bic}. Some 
remarks to treat pressure boundary conditions are given in 
Section \ref{sec.bc}.

\subsection{State of the art}

Our study is motivated by some related problems. First, in \cite{JMZ18}, we have
analyzed the cross-diffusion system
\begin{equation}\label{1.waals}
  \pa_t\rho_i - \diver\sum_{j=1}^N\big(\rho_i\rho_j + \eps D_{ij}(\rho)\big)\na\mu_j = 0,
	\quad i=1,\ldots,N,
\end{equation}
where the pressure models a Van der Waals gas mixture (this determines $\mu_j$)
and the matrix $(D_{ij}(\rho))$ is positive definite on the orthogonal complement 
of a one-dimensional subspace of $\R^N$. 
Because of the lack of parabolicity, the diffusion fluxes
$J_i=-\eps\sum_{j=1}^N D_{ij}(\rho)\na\mu_j$ with $\eps>0$ were needed to apply
the boundedness-by-entropy method \cite{Jue15}. Unfortunately, the authors
of \cite{JMZ18} were not able to perform the limit $\eps\to 0$. In this paper,
we show that \eqref{1.waals} admits solutions also in the case $\eps=0$
(for suitable pressure functions).

Second, consider $N$ interacting biological species with densities $\rho_i$
and velocities $v_i$ governed by the continuity equations 
$$
  \pa_t\rho_i + \diver(\rho_i v_i) = 0, \quad i=1,\ldots,N.
$$
When dispersal is a response to population pressure, we may assume that the
dispersal of each of the species is driven by the gradient of the total population
\cite{BGHP85,GuPi84},
$$
  v_i = -k_i\na\sum_{j=1}^N\rho_j, \quad i=1,\ldots,N.
$$
This leads to system \eqref{1.rhomu} with $M_{ij}=\rho_i\rho_j$ and
$\mu_j=k_j\log\rho_j$. The model describes the evolution of the cell populations
$\rho_i$ in tissues and tumors. It was analyzed for two species $N=2$ in \cite{BGHP85}
in one space dimension and in \cite{BHIM12} in several space dimensions.
The existence of global weak and classical solutions was proved. They possess
the particular feature that they are segregated if they do so initially,
i.e., their support is disjoint for all time. For related models, we refer to
\cite{GaSe14,GSV15}.

A nonlinear pressure function $p=(\rho_1+\rho_2)^\alpha$ with $\alpha>1$ was
considered in \cite{GPS19}, giving \eqref{1.rhomu} with $N=2$,
$M_{ij}=\rho_i\rho_j(\rho_1+\rho_2)^{\alpha-1}$, and $\mu_j=\log\rho_j$.
The existence of weak solutions in the whole space was proved in the presence of
reaction terms. The two-species system of Bertsch et a.\ \cite{BHIM12}
with nonlocal interaction terms models aggregation and repulsion of the species
in the context of chemotaxis, opinion formation, and pedestrian dynamics
(see the references in \cite{BDFS18}). 

If the pressure is the variational derivative of a certain energy functional $F(\rho)$, 
we may write \eqref{1.cont} for $N=2$ as the (formal) gradient flow
$$
  \pa_t\rho_1 = \diver\bigg(\rho_1\na\frac{\delta F}{\delta\rho_1}\bigg), \quad
	\pa_t\rho_2 = \diver\bigg(\rho_2\na\frac{\delta F}{\delta\rho_2}\bigg).
$$
This relation has been exploited in \cite{BCPS19,DEF18}, proving the convergence
of the minimizing movement scheme for quadratic functionals $F$ \cite{BCPS19}
or general convex functionals \cite{DEF18}, even including nonlocal terms; also see
\cite{CFSS18}. 

Related systems that consist of the mass continuity equation 
\eqref{1.cont} and the Darcy law \eqref{1.v} are analyzed in the literature also
in the context of fluid flows in porous media \cite{LDQ11}, often extended
by the Darcy--Brinkman law for fractures in porous media \cite{MoSh17} 
or for tumor growth models \cite{GLNS18}.

Surprisingly, there are almost no results for general $N$-species models.
The work \cite{CoUg16} studies \eqref{1.rhomu}
with $M_{ij}=\rho_i\rho_j/\rho_{\rm tot}$, where $\rho_{\rm tot}=\sum_{i=1}^N\rho_i$,
and $\mu_j=\log\rho_j$. Furthermore, system \eqref{1.rhomu} with 
$M_{ij}=a_{ij}\rho_i\rho_j$ and $a_{ij}>0$ is the mean-field limit of an 
interacting stochastic particle system \cite{CDJ19}. Up to our knowledge, there
exists no general global existence result for \eqref{1.rhomu}.

In this paper, we analyze the $N$-species system with a rather general 
pressure-density relation, extending all previous existence results. 
In \cite{BHIM12,GuPi84}, the problem is decomposed into a porous-medium-type equation 
for the total density $\rho_{\rm tot}$ (in our situation: $\Lambda(\rho)$)
and transport equations for the mass fractions $\rho_i/\rho_{\rm tot}$, $i=1,\ldots,N$ 
(in our situation: $\rho_i/\Lambda(\rho)$).
Compared to previous work, the relationship \eqref{1.p} between the pressure and the
densities is more general, we allow for an arbitrary number of species, and
we prove the uniqueness of weak and classical solutions. Special effort is
necessary to allow for partially vanishing initial total densities \cite{CFSS18,GPS19};
in the context of fluid dynamics, however, initial vacuum would not be meaningful.

\subsection{Key ideas}\label{keyideas}

As mentioned above, the idea is to decompose the equations in one parabolic
equation for the function $\Lambda(\rho)$ and $N$ transport equations
for the variables $u_i=\rho_i/\Lambda(\rho)$.

First, we derive the parabolic equation.
Let $(\rho,v)$ be a differentiable solution to \eqref{1.cont}--\eqref{1.p}.
Multiplying \eqref{1.cont} by $\pa\Lambda/\pa\rho_i$ and summing over $i=1,\ldots,N$
leads to
$$
  \pa_t\Lambda(\rho) + v\cdot\na\Lambda(\rho) + \Lambda'(\rho)\cdot\rho
	\diver v = 0.
$$
The positive homogeneity of $\Lambda$ implies that $\Lambda'(\rho)\cdot\rho
=\Lambda(\rho)$, so $\Lambda$ solves the conservation law
$\pa_t\Lambda(\rho)+\diver(\Lambda(\rho)v)=0$. Then we infer from
$v=-\kappa\na p=-\kappa G'(\Lambda(\rho))\na\Lambda(\rho)$ that the variable
$w:=\Lambda(\rho)$ solves the nonlinear diffusion equation
\begin{equation}\label{1.w}
  \pa_t w - \diver(\kappa w G'(w)\na w) = 0\quad\mbox{in }\Omega\times(0,T),
\end{equation}
together with the initial and boundary conditions
\begin{equation}\label{1.bicw}
  w(0) = w^0 := \Lambda(\rho^0)\quad\mbox{in }\Omega, \quad
	\na w\cdot\nu = 0\quad\mbox{on }\pa\Omega\times(0,T).
\end{equation}

We claim that the variable $u_i=\rho_i/\Lambda(\rho)$ ($i=1,\ldots,N$), 
which can be interpreted
as a kind of volume fraction, solves a transport equation. Indeed, with the
material derivative $\dot{u}=(\pa_t+v\cdot\na)u$, it follows that $\rho_i$ solves
$\dot\rho_i = -\rho_i\diver v$. We use the continuity
equation \eqref{1.cont} and the identity $\Lambda'(\rho)\cdot\rho=\Lambda(\rho)$ 
to compute
$$
  \dot{u}_i = \frac{\dot\rho_i}{\Lambda(\rho)} - \frac{\rho_i}{\Lambda(\rho)^2}
	\Lambda'(\rho)\cdot\dot\rho
	= \frac{\dot\rho_i}{\Lambda(\rho)} + \frac{\rho_i}{\Lambda(\rho)^2}
	\Lambda'(\rho)\cdot\rho\diver v = 0.
$$
Thus, the volume fractions $u_i$ are just transported:
\begin{equation}\label{1.u}
  \dot{u}_i = 0 \quad\mbox{in }\Omega\times(0,T),
	\quad u_i(0)=u_i^0:=\frac{\rho_i^0}{\Lambda(\rho^0)}
	\quad\mbox{in }\Omega,\ i=1,\ldots,N.
\end{equation}

The nonlinear diffusion equation \eqref{1.w} is solved by standard
techniques: The weak maximum principle yields positive lower and upper bounds
for the solution $w$, and parabolic regularity theory provides 
further a priori estimates.
The velocity is then given by $v=-\kappa G'(w)\na w$. Because of its low regularity,
we approximate the velocity field by smooth functions $v_\eps$, such that
we can solve the transport equation \eqref{1.u} with $v$ replaced by $v_\eps$.
This yields the solutions $u_i^\eps$ and the approximate densities
$\rho_i^\eps := wu_i^\eps$ for $i=1,\ldots,N$. The properties of the approximate
velocity fields allow us to prove that $\rho_i^\eps$ converges to a weak solution
$\rho_i$ to the mass continuity equation as $\eps\to 0$. 
For weak solutions and in particular for the proof of $\Lambda(\rho)=w$, 
the use of renormalization techniques is necessary, since 
we want to solve the continuity equations with a velocity field that possesses 
only local Sobolev regularity.

\subsection{Notation}

We set $\R_+=(0,\infty)$ and $\R_{+,0}=[0,\infty)$.
The space $C^{k+\alpha}(\Omega)$ for $k\in\N$, $\alpha\in(0,1]$
consists of all functions $u\in C^k(\overline\Omega)$ whose $k$th 
partial derivatives are
H\"older continuous of order $\alpha$ up to the boundary of $\Omega$. 
Set $Q_T:=\Omega\times(0,T)$ and $\overline{Q}_T:=\overline\Omega\times[0,T]$.
The space $C^{k+\alpha,\ell+\beta}(\overline{Q}_T)$ consists of all functions which
are $C^{k+\alpha}$ with respect to the spatial variable and $C^{\ell+\beta}$ with
respect to the time variable, where $k$, $\ell\in\N$ and $\alpha$, $\beta\in(0,1]$.

\subsection{Main results}

We make the following assumptions.

\begin{labeling}{(A44)}
\item[(A1)] Domain: $\Omega\subset\R^3$ is a bounded domain with Lipschitz boundary.
\item[(A2)] Initial datum: $\rho^0=(\rho_1^0,\ldots,\rho_N^0)\in L^\infty(\Omega;\R^N)$ 
satisfies $p(\rho^0):= G(\Lambda(\rho^0)) \in H^1(\Omega)$, $\rho_i^0\ge 0$ in 
$\Omega$ for $i=1,\ldots,N$, 
and $\sum_{i=1}^N\rho_i^0\ge c_0>0$ in $\Omega$ for some $c_0>0$.
\item[(A3)] Function $G$: $G\in C^1(\R_+)$ is strictly increasing, i.e.\ $G'(s)>0$
for all $s>0$, and $s\mapsto sG'(s)$ is of class $C^2(\R_+)$.
\item[(A4)] Function $\Lambda$: $\Lambda\in C^2(\R_+^N)\cap C^0(\R^N_{+,0})$
is nonnegative, convex, and positively homogeneous (of degree one) and there exist
constants $0<r_0<r_1<\infty$ such that
$$
  r_0|\rho| \le\Lambda(\rho)\le r_1|\rho| \quad\mbox{for all }\rho\in\R_+^N.
$$
\end{labeling}

Assumptions (A1) and (A2) are rather natural. We already mentioned before that
partially vanishing total densities are not meaningful in fluid dynamics,
and we require in Assumption (A2) that $\sum_{i=1}^N\rho_i^0$ is strictly positive.
The variable $w=\Lambda(\rho)$ satisfies the porous-medium-type equation \eqref{1.w},
and the condition that $s\mapsto sG'(s)$ is of class $C^2(\R_+)$ in Assumption
(A3) is needed to deduce classical solutions; see Section \ref{sec.class}.
The hypotheses on the volume extension $\Lambda$ in Assumption (A4) guarantee
that the variable $u_i=\rho_i/\Lambda(\rho)$ satisfies the transport equation
\eqref{1.u}. Moreover, the linear growth condition simplifies some estimates
in Section \ref{sec.weak}.

A possible obvious extension would be to consider porosity coefficients depending
on the density or pressure, $\kappa = \kappa(\Lambda(\rho))$ or $\kappa = \kappa(p)$.
We comment briefly in Section \ref{sec.open} how Assumption (A3) can be
suitably modified to treat this case.

We next formulate our main theorems on the well-posedness analysis.

\begin{theorem}[Classical solutions]\label{thm.class}
Let Assumptions (A1)--(A4) hold, let $T>0$, and $\kappa>0$. Furthermore, let
$\pa\Omega\in C^{2+\alpha}$ for some $\alpha>0$, $G\in C^2(\R_+)$, and
$\rho^0\in C^{1+\alpha}(\overline\Omega;\R^N)$ such that $p(\rho^0)\in 
C^{2+\alpha}(\overline\Omega)$ and $\na p(\rho^0)\cdot\nu=0$ on $\pa\Omega$. 
Then there exists a unique classical solution $\rho\in C^{1+\alpha,1}
(\overline{Q}_T;\R_{+,0}^N)$ with $p(\rho)\in C^{2+\alpha,1+\alpha/2}
(\overline{Q}_T)$ to the equations
$$
  \pa_t\rho_i - \diver(\kappa\rho_i\na p(\rho)) = 0\quad\mbox{in }\Omega\times(0,T),\
	i=1,\ldots,N,
$$
and the initial and boundary conditions \eqref{1.bic} are satisfied.
\end{theorem}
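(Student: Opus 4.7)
The plan is to follow the decomposition sketched in Subsection~\ref{keyideas}: first solve a quasilinear scalar parabolic problem for $w=\Lambda(\rho)$, then integrate the linear transport equations for the volume fractions $u_i=\rho_i/\Lambda(\rho)$ along the characteristics of the resulting velocity field, and finally reconstruct $\rho_i := w u_i$ and verify the original system. Uniqueness will be obtained by running the same decomposition in reverse.

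\emph{Parabolic step.} Setting $w^0 := \Lambda(\rho^0)$, the hypotheses $\rho_i^0 \ge 0$, $\sum_i\rho_i^0 \ge c_0 > 0$, and (A4) give $w^0 \ge r_0 c_0$ in $\overline\Omega$, while $p(\rho^0) = G(w^0) \in C^{2+\alpha}(\overline\Omega)$ together with $G \in C^2$ and $G' > 0$ yields $w^0 \in C^{2+\alpha}(\overline\Omega)$; the compatibility $\na p(\rho^0)\cdot\nu = 0$ rewrites as $\na w^0\cdot\nu = 0$. The weak maximum principle applied to \eqref{1.w}--\eqref{1.bicw} will furnish two-sided bounds $0 < m \le w \le M$, on which $s\mapsto \kappa s G'(s)$ is strictly positive and of class $C^2$ by (A3). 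Classical quasilinear parabolic Schauder theory (Ladyzhenskaya--Solonnikov--Uraltseva), using the $C^{2+\alpha}$ boundary and the compatibility condition, then delivers a unique classical solution $w \in C^{2+\alpha,1+\alpha/2}(\overline Q_T)$. Consequently $p = G(w) \in C^{2+\alpha,1+\alpha/2}(\overline Q_T)$ and $v := -\kappa\na p \in C^{1+\alpha,\alpha/2}(\overline Q_T;\R^3)$ with $v\cdot\nu = 0$ on $\pa\Omega$.

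\emph{Transport step and reconstruction.} With this velocity, the characteristic ODE $\dot X(t;x) = v(X(t;x),t)$, $X(0;x)=x$, generates a $C^{1+\alpha}$ flow of diffeomorphisms of $\overline\Omega$ (the boundary is preserved since $v\cdot\nu = 0$). Setting $u_i^0 := \rho_i^0/w^0 \in C^{1+\alpha}(\overline\Omega;\R_{+,0})$ and propagating the initial datum along characteristics yields $u_i \in C^{1+\alpha,1}(\overline Q_T;\R_{+,0})$ with $\pa_t u_i + v\cdot\na u_i = 0$. Positive homogeneity of $\Lambda$ gives $\frac{d}{dt}\Lambda(u(X(t;x),t)) = \Lambda'(u)\cdot \dot u = 0$, so $\Lambda(u) \equiv \Lambda(u^0) = 1$ is preserved. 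Setting $\rho_i := w u_i$ one obtains $\rho_i \in C^{1+\alpha,1}(\overline Q_T;\R_{+,0})$, $\Lambda(\rho) = w\Lambda(u) = w$, and $p(\rho) = G(w) \in C^{2+\alpha,1+\alpha/2}(\overline Q_T)$. A direct calculation combining $\pa_t w + \diver(wv) = 0$ from the parabolic step with $\pa_t u_i + v\cdot\na u_i = 0$ yields the desired $\pa_t \rho_i + \diver(\rho_i v) = 0$ with $v = -\kappa\na p(\rho)$; the initial and boundary conditions hold by construction.

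\emph{Uniqueness and main difficulty.} If $\tilde\rho$ is another classical solution with the same data, then $\tilde w := \Lambda(\tilde\rho)$ solves the same uniformly parabolic problem \eqref{1.w}--\eqref{1.bicw}, forcing $\tilde w = w$ by parabolic uniqueness; then $\tilde u_i := \tilde\rho_i/\tilde w$ satisfies the same linear transport equation with the same $C^{1+\alpha}$ velocity and the same initial datum, so $\tilde u_i = u_i$ by uniqueness of the characteristic flow. The delicate point throughout is matching H\"older regularities across the decomposition: the parabolic step must produce $v$ with at least $C^{1+\alpha}$ spatial regularity so that the flow inherits that regularity and the transported $u_i$ lands in $C^{1+\alpha,1}$, while the compatibility $\na p(\rho^0)\cdot\nu = 0$ is precisely what allows boundary Schauder estimates to be invoked for \eqref{1.w} up to $\pa\Omega$. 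The strict positivity $w \ge m > 0$, essential both for uniform parabolicity of \eqref{1.w} and for $u_i = \rho_i/w$ to be well-defined and smooth, is supplied exactly by the no-vacuum assumption in (A2).
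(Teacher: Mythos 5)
Your proposal is correct and follows essentially the same route as the paper: solve the quasilinear parabolic problem \eqref{1.w}--\eqref{1.bicw} for $w=\Lambda(\rho)$ via the maximum principle and Ladyzhenskaya--Solonnikov--Uraltseva theory, transport $u_i=\rho_i/\Lambda(\rho)$ along the characteristics of $v=-\kappa\na G(w)$, verify $\Lambda(u)\equiv 1$, and reconstruct $\rho_i=wu_i$. Your explicit reverse-direction uniqueness argument is a welcome (and consistent) elaboration of what the paper leaves implicit.
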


\begin{theorem}[Weak solutions]\label{thm.weak}
Let Assumptions (A1)--(A4) hold, let $T>0$, and $\kappa>0$. 
Furthermore, let $\pa\Omega$ be piecewise of class $C^2$ (i.e., $\Omega$ is a
curvilinear polyhedron). Then there exists a weak solution $(\rho,v)$ 
to \eqref{1.cont}--\eqref{1.p} and \eqref{1.bic} such that
$\rho\ge 0$ in $Q_T$, $\rho\in L^\infty(Q_T;\R^N)$, $v=-\kappa\na p(\rho)\in L^2(Q_T)$,
and $\diver v\in L^\sigma(Q_T)$ for some $\sigma>4/3$, and $\rho$ 
is the unique weak solution to \eqref{1.cont} and \eqref{1.bic}.
\end{theorem}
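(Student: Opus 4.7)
The plan follows the decomposition indicated in Section \ref{keyideas}: I first solve a scalar porous-medium-type equation for $w:=\Lambda(\rho)$, then build the individual densities $\rho_i$ by multiplying $w$ with transport solutions for the volume fractions $u_i$ obtained from a mollified velocity, and finally close the loop by identifying $\Lambda(\rho)=w$. Closing this loop is the main obstacle and is where renormalization techniques enter.

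\emph{Parabolic block.} Assumptions (A2)--(A4) yield $w^0:=\Lambda(\rho^0)\in L^\infty(\Omega)$ with $w^0\ge r_0 c_0>0$ and $G(w^0)\in H^1(\Omega)$. I solve \eqref{1.w}--\eqref{1.bicw} by a Galerkin scheme; the weak maximum principle produces $0<\underline{w}\le w\le\overline{w}<\infty$, making the equation uniformly parabolic on the range of $w$. Standard parabolic regularity then gives $w\in L^\infty(Q_T)\cap L^2(0,T;H^1(\Omega))$, $\pa_t w\in L^2(0,T;H^{-1}(\Omega))$, and $p:=G(w)\in L^2(0,T;H^1(\Omega))$, so that $v:=-\kappa\na p\in L^2(Q_T)$ satisfies $v\cdot\nu=0$ on $\pa\Omega$ in the trace sense. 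Rewriting \eqref{1.w} as $\pa_t w+\diver(wv)=0$ and dividing by $w\ge\underline{w}$ gives $\diver v=-(\pa_t w+v\cdot\na w)/w$; a Meyers-type higher-integrability estimate valid on a curvilinear polyhedron then lifts $\diver v$ into $L^\sigma(Q_T)$ for some $\sigma>4/3$.

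\emph{Transport block and limit passage.} Because $v$ has only Sobolev regularity, I extend it across $\pa\Omega$ consistently with the no-penetration condition and mollify to obtain smooth $v_\eps$ with $v_\eps\to v$ in $L^2(Q_T)$ and $\diver v_\eps\to\diver v$ in $L^\sigma(Q_T)$. The method of characteristics then produces classical $u_i^\eps$ solving $\pa_t u_i^\eps+v_\eps\cdot\na u_i^\eps=0$ with data $\rho_i^0/\Lambda(\rho^0)$; these obey $0\le u_i^\eps\le 1$ and $\sum_i u_i^\eps\equiv 1$ by the maximum principle. Setting $\rho_i^\eps:=w u_i^\eps$ and using \eqref{1.w} I verify
\begin{equation*}
\pa_t\rho_i^\eps+\diver(\rho_i^\eps v)=u_i^\eps\,\diver\bigl(w(v-v_\eps)\bigr)\longrightarrow 0
\end{equation*}
in the sense of distributions. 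From $\|\rho_i^\eps\|_{L^\infty(Q_T)}\le\overline w$ I extract a weak-$*$ limit $\rho_i$ satisfying \eqref{1.cont} with the fixed velocity $v$ from the parabolic block and with correct initial trace. Uniqueness of $\rho_i$ for this $v$ is a consequence of the DiPerna--Lions renormalization theory, whose hypotheses $v\in L^2(0,T;W^{1,2}_{\mathrm{loc}})$ (from $p\in L^2(0,T;H^1)$) and $\diver v\in L^\sigma(Q_T)$ have just been verified.

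\emph{Closing the loop $\Lambda(\rho)=w$.} Because $\Lambda$ is nonlinear and we only have weak-$*$ convergence of $\rho^\eps$, this identification is not automatic. Here I appeal to the renormalization property: on the region $|\rho|\le C\overline{w}$, away from the origin after we show $\sum_i\rho_i\ge\underline{w}/r_1>0$, the function $\Lambda$ is $C^2$ by (A4), so the renormalized chain rule applied to \eqref{1.cont}, multiplied by $\pa\Lambda/\pa\rho_i$ and summed in $i$, together with $\Lambda'(\rho)\cdot\rho=\Lambda(\rho)$, produces
\begin{equation*}
\pa_t\Lambda(\rho)+\diver\bigl(\Lambda(\rho)\,v\bigr)=0,\qquad \Lambda(\rho)|_{t=0}=w^0.
\end{equation*}
This is the same linear continuity equation satisfied by $w$ with the same initial datum, so DiPerna--Lions uniqueness for bounded weak solutions of linear transport forces $\Lambda(\rho)=w$ a.e. Consequently $v=-\kappa\na G(\Lambda(\rho))$, i.e., \eqref{1.cont}--\eqref{1.p} hold. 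The genuinely delicate technical point is justifying this renormalized chain rule for $\Lambda$ with a velocity that is only $L^2(0,T;W^{1,2}_{\mathrm{loc}})$ and has $L^\sigma$ divergence on a domain whose boundary is merely piecewise $C^2$; the strict positivity $w\ge\underline{w}$ is essential to keep $\rho$ in the smooth regime of $\Lambda$ throughout this argument.
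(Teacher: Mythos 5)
Your overall strategy coincides with the paper's: solve the porous-medium equation for $w=\Lambda(\rho)$, transport the fractions $u_i$ along a smoothed velocity, set $\rho_i=wu_i$, and identify $\Lambda(\rho)=w$ by renormalization. However, two steps that you treat as routine are in fact the technical core of the argument, and as written they do not go through.

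First, the construction of the approximations $v_\eps$. You propose to ``extend $v$ across $\pa\Omega$ consistently with the no-penetration condition and mollify,'' obtaining $v_\eps\cdot\nu=0$ with $\diver v_\eps\to\diver v$ in $L^\sigma(Q_T)$. For a merely piecewise $C^2$ boundary and a velocity with no global Sobolev regularity for its gradient, such an extension is not available off the shelf, and preserving both the tangency and the convergence of the divergence is precisely the obstruction. The paper instead cuts $v$ off near $\pa\Omega$ with functions $\phi_m$ and mollifies the truncation; the price is a commutator term $v\cdot\na\phi_m$ in $\diver v_\eps$, and showing that it vanishes in $L^1(Q_T)$ requires (i) the fractional boundary regularity $\na u(t)\in H^{1/2+\delta}(\Omega)$ for the Neumann problem (Amrouche et al.) and (ii) a Hardy-type inequality of Lions--Magenes for $H^{1/2+\delta}$ functions with vanishing trace, which together give $\|v\cdot\na\phi_m\|_{L^p}\le C\|\pa_t u\|_{L^2}$ uniformly in $m$. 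Without an argument of this type (or a genuinely justified extension operator), your transport block and also your passage $m\to\infty$ in the renormalized identity are unsupported. The same localization issue reappears in your ``closing the loop'' step: DiPerna--Lions renormalization is interior, so one must first multiply by $\phi_m$ and then remove the cutoff, which again needs $v\cdot\na\phi_m\to 0$.

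Second, your verification of the DiPerna--Lions hypotheses contains an error: $p\in L^2(0,T;H^1(\Omega))$ gives $v=-\kappa\na p\in L^2(Q_T)$, not $v\in L^2(0,T;W^{1,2}_{\mathrm{loc}})$; local Sobolev regularity of $v$ requires control of \emph{second} derivatives of $w$. The paper obtains $D^2u(t)\in L^2_{\mathrm{loc}}$ with norm bounded by $\|\pa_t u(t)\|_{L^2}$ from interior elliptic regularity, interpolates $|\na u|^2$ into $L^{4/3}(0,T;L^2_{\mathrm{loc}})$, and concludes only $\na v\in L^{4/3}(0,T;L^2_{\mathrm{loc}}(\Omega))$ — still enough for renormalization, but by a different and necessary argument. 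Similarly, $\diver v\in L^{2-2/q}(Q_T)$ with $q>3$ comes from the identity $\diver v=f(u)\Delta u+f'(u)|\na u|^2$ combined with a global $W^{1,q}$ estimate for the Neumann problem on Lipschitz domains (Zanger), not from a Meyers estimate for $\na p$ alone. A small further slip: the transported invariant is $\Lambda(u^\eps)=1$, not $\sum_i u_i^\eps=1$, since $\Lambda$ is a general positively homogeneous function.
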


The paper is organized as follows. The pressure relation \eqref{1.p} and
the cross-diffusion formulation \eqref{1.rhomu} is motivated in Section
\ref{sec.model}. Then the proofs of Theorems \ref{thm.class} and \ref{thm.weak}
are presented in Sections \ref{sec.class} and \ref{sec.weak}, respectively. 
Finally, we collect in Section \ref{sec.open} some possible extensions of our 
results and open problems.

%%%%%%%%%%%%%%%%%%%%%%%%%%%%%%%%%%%%%%%%%%%%%%%%%%%%%%%%%%%%%%%%%%%%%%%%%%%%%

\section{Modeling}\label{sec.model}

In this section, we motivate system \eqref{1.cont}--\eqref{1.p}. We consider
$N$ chemical substances with mass densities $\rho_i$ whose isothermal evolution
is governed by the continuity equations \eqref{1.cont}, subject to Darcy's
law \eqref{1.v}. We introduce the partial number densities $n_i=\rho_i/m_i$,
where the molecular masses $m_1,\ldots,m_N$ are positive constants. Then
$n_{\rm tot}:=\sum_{i=1}^Nn_i$ is the total number density, and
$X_i:=n_i/n_{\rm tot}$ are the number fractions.

In order to model the thermodynamic pressure $p$, we follow some ideas exposed in 
\cite[Section 15]{BoDr15} to describe the free energy of elastic mixtures. We also
refer to \cite{DGM13,DGM18} for further particular models in the case of mixtures 
with charged carriers. The pressure is related to the volume extension of the mixture 
via a state equation of the following kind:
\begin{equation}\label{2.p}
  p = G\big(n_{\rm tot}H(X_1,\ldots,X_N)\big),
\end{equation}
where $G$ and $H$ are suitable functions. The choice $G(s)=s$, $H=1$ gives
the pressure law of (isothermal) ideal gases, namely $p=n_{\rm tot}$. 
Other special choices are $G(s)=s^\alpha$ with $\alpha>1$ and $H=1$,
giving $p = n_{\rm tot}^{\alpha}$, while for $G(s)=s^\alpha$, 
$H(X)=(\sum_{i=1}^NX_i^\alpha)^{1/\alpha}$, we obtain $p = \sum_{i=1}^N n_i^{\alpha}$. 
We refer to \cite{Li67} for more examples of nonlinear state equations,
often refered to as Tait equations.
We notice that in the last example $p = \sum_{i=1}^N n_i^{\alpha}$, there 
is possibly a relationship to the Dalton law with partial pressures of the 
constituents obeying $p_i=n_i^\alpha$ with uniform exponent $\alpha$. 
This seems to be a by-product of the positive homogenity assumption for $H$,
for suitably chosen functions $G$. In general, the state equation \eqref{2.p} 
yields models that are not equivalent to the Dalton law.
The factor $H$ which represents an average volume can be used to model 
finite-volume effects of the molecules, for instance with the linear ansatz 
$H=\sum_{i=1}^N V_iX_i$ ($V_i$ are the reference partial volumes, like in 
\cite{DGL14}). 

To simplify our notation, we express
the pressure law for the mass densities. Indeed, noting that we can identify
each function of $n_1,\ldots,n_N$ with a function of $\rho_1,\ldots,\rho_N$ via
$\rho_i=m_in_i$, we express \eqref{2.p} in the equivalent form
$p=G(\Lambda(\rho_1,\ldots,\rho_N))$, where
$$
  \Lambda(\rho_1,\ldots,\rho_N) := n_{\rm tot}H\bigg(\frac{n_1}{n_{\rm tot}},\ldots,
	\frac{n_N}{n_{\rm tot}}\bigg)
$$
describes the volume extension of the mixture. 
It is essential for our analysis that the mapping $\rho\mapsto\Lambda(\rho)$ 
is positively homogeneous of degree one. This condition is always satisfied 
in applications, since the function $H$ depends only on the fractions
$n_i/n_{\rm tot}$ which are homogeneous of degree zero in $\rho$.

We claim that system \eqref{1.cont}--\eqref{1.p} can be formulated as the
cross-diffusion system \eqref{1.rhomu}. To this end, we introduce the scalar function
$$
  h_M(s) = s\int_{s_0}^s\frac{G(\tau)}{\tau^2}d\tau + \mbox{const.}, \quad s>0,
$$
and the free energy $h(\rho)=h_M(\Lambda(\rho))$ for $\rho\in[0,\infty)^N$.
The Hessian of $h$,
$$
  D^2 h = h_M''(D\Lambda\otimes D\Lambda) + h_M'D^2\Lambda,
$$
is positive definite if $h_M$ is increasing and convex and $\Lambda$ is convex. 
For instance, we may assume that $G$ is increasing, which leads to 
$h_M''(s)=G'(s)/s>0$. The pressure is given by the Gibbs--Duhem relation
$$
  p = -h + \sum_{i=1}^N\rho_i\mu_i,
$$
where $\mu_i:=\pa h/\pa\rho_i$ are the chemical potentials. Since
$sh_M'(s)-h_M(s)=G(s)$ and $\Lambda$ is supposed to be positively homogeneous
(implying that $\Lambda'(\rho)\cdot\rho=\Lambda(\rho)$), we find that
$$
  p = -h_M(\Lambda(\rho)) + \sum_{i=1}^N\rho_i\frac{\pa\Lambda}{\pa\rho_i}(\rho)
	h_M'(\Lambda(\rho)) = -h_M(\Lambda(\rho)) + h_M'(\Lambda(\rho))\Lambda(\rho)
	= G(\Lambda(\rho)),
$$
which equals \eqref{1.p}. Furthermore, the Gibbs--Duhem relation implies that
$\na p= \sum_{j=1}^N\rho_j\na\mu_j$, and inserting this expression into 
\eqref{1.cont}--\eqref{1.v} leads to the cross-diffusion system \eqref{1.rhomu}.

%%%%%%%%%%%%%%%%%%%%%%%%%%%%%%%%%%%%%%%%%%%%%%%%%%%%%%%%%%%%%%%%%%%%%%%%%%%%%

\section{Proof of Theorem \ref{thm.class}}\label{sec.class}

Problem \eqref{1.w}--\eqref{1.bicw} is a quasilinear parabolic equation with
Neumann boundary conditions. Its unique solvability in $C^{2+\alpha,1+\alpha/2}
(\overline{Q}_T)$ follows from Theorem 7.4 in \cite[Chapter V.7]{LSU68} 
(also see \cite[Theorem 10.24]{FeNo09}) if the following conditions are satisfied:
\begin{itemize}
\item Every classical solution to \eqref{1.w}--\eqref{1.bicw} is bounded from above
and below;
\item the coefficient $\kappa wG'(w)$ is of class $C^2(\R_+)$;
\item the initial datum satisfies $w^0:=\Lambda(\rho^0)\in 
C^{2+\alpha}(\overline\Omega)$.
\end{itemize}
These conditions are satisfied. Indeed, 
the weak maximum principle for parabolic equations shows that
\begin{equation}\label{2.Lambda}
  0 < \Lambda_* := \inf_{x\in\Omega}\Lambda(\rho^0(x))\le w(x,t)
	\le \Lambda^* := \sup_{x\in\Omega}\Lambda(\rho^0(x))\quad\mbox{for all }(x,t)\in Q_T,
\end{equation}
where $\Lambda_* = r_0c_0>0$, according to Assumptions (A2) and (A4).
The second condition follows from Assumption (A3) and the third condition
is a consequence of the assumptions of the theorem. Thus, there exists a unique solution
$w\in C^{2+\alpha,1+\alpha/2}(\overline{Q}_T)$ to \eqref{1.w}--\eqref{1.bicw}.

Next, we solve \eqref{1.u}. The transport velocity $v=-\kappa G'(w)\na w
=-\kappa \na G(w)$ is bounded
in $L^\infty(Q_T)$ and $\diver v$ is bounded in $C^{\alpha}(\overline{Q}_T)$.
This allows us to find for $x\in\Omega$, $t\in[0,T]$ the characteristic curves of
$$
  \Phi'(s;t,x) = v(\Phi(s;t,x), \, s),\quad s\in(0,T),\quad \Phi(t;t,x) = x.
$$
For all $s\in[0,T]$, the map $(x,t)\mapsto \Phi(s;x,t)$ belongs to
$C^{1+\alpha,1+\alpha/2}(\overline{Q}_T)$. Thus, problem \eqref{1.u} admits the
unique solutions
$$
  u_i(x,t) = u_i^0(\Phi(0;t,x)), \quad i=1,\ldots,N.
$$
Since $u^0=\rho^0/\Lambda(\rho^0)\in C^1(\overline\Omega;\R^N)$, 
we have $u\in C^1(\overline{Q}_T;\R^N)$.

We claim that $\rho = wu$ is a classical solution to \eqref{1.cont}, \eqref{1.bic}.
The positive homogeneity of $\Lambda$ shows that $\Lambda(\rho)=w\Lambda(u)$.
Moreover, $(\Lambda(u))^{\boldsymbol{\cdot}} = \Lambda'(u)\dot{u} = 0$ and
$\Lambda(u(0))=\Lambda(u^0)=1$, by definition of $u^0$. We conclude that
$\Lambda(u)=1$. A more direct proof of this property is as follows:
We multiply the equation $0=\dot{u} = \pa_t u + v\cdot\na u$ by $\Lambda'(u)$,
which yields $\pa_t\Lambda(u)+v\cdot\Lambda(u)=0$. Then we compute
\begin{align*}
  \frac{d}{dt}\int_\Omega(\Lambda(u)-1)^2dx
	&= -2\int_\Omega (\Lambda(u)-1)v\cdot\na\Lambda(u)dx
	= -\int_\Omega v\cdot\na(\Lambda(u)-1)^2 dx \\
	&= \int_\Omega\diver(v)(\Lambda(u)-1)^2 dx,
\end{align*}
and the regularity $\diver v\in L^1(0,T;L^\infty(\Omega))$ is sufficient to
conclude fron Gronwall's lemma that $\Lambda(u)=1$. We infer that
$\Lambda(\rho)=w\Lambda(u)=w$. We know that $w$ solves 
$0 = \pa_t w - \diver(\kappa w\na G(w)) = \pa_t w + \diver(wv) = \dot{w} + w\diver v$.
Then we deduce from
$$
  \dot{\rho}_i = \dot{w}u_i + w\dot{u}_i = \dot{w}u_i, \quad i=1,\ldots,N,
$$
and $\dot{w}=-w\diver v$ that
$$
  \dot{\rho}_i = \dot{w}u_i = -wu_i\diver v = -\rho_i\diver v,
$$
which is \eqref{1.cont} with $v=-\kappa\na G(w) = -\kappa\na G(\Lambda(\rho))$.
The initial condition in \eqref{1.bic} is satisfied since
$$
  \rho_i(0) = w(0)u_i(0) = w^0u_i^0 = \Lambda(\rho^0)\frac{\rho_i^0}{\Lambda(\rho^0)}
	= \rho_i^0\quad\mbox{in }\Omega.
$$
This finishes the proof of Theorem \ref{thm.class}.

%%%%%%%%%%%%%%%%%%%%%%%%%%%%%%%%%%%%%%%%%%%%%%%%%%%%%%%%%%%%%%%%%%%%%%%%%%%%%

\section{Proof of Theorem \ref{thm.weak}}\label{sec.weak}

We show first the following technical approximation property. It is needed to
construct smooth approximations of the velocity field and to identify the
weak limit of the density vector.

\begin{lemma}\label{lem.approx}
Let the assumptions of Theorem \ref{thm.weak} hold. Then there exists a family of 
functions $(\phi_m)_{m>0}\subset C^2(\overline\Omega)$ such that $0\le\phi_m\le 1$ in
$\Omega$, $\phi_m(x)=0$ for all $x\in\Omega$ with 
$\operatorname{dist}(x,\pa\Omega)<1/m$, $\phi_m\to 1$ locally uniformly in 
$\Omega$ as $m\to\infty$, and satisfying the following
property: For all $0<\delta<1/2$, there exists $C(\delta)>0$ such that
for all $g\in H^{1/2+\delta}(\Omega;\R^3)$ satisfying $g\cdot\nu=0$ on $\pa\Omega$
and for any $1\le p< 1/(1-\delta)$,
$$
  \|g\cdot\na\phi_m\|_{L^p(\Omega)}\le C(\delta)\|g\|_{H^{1/2+\delta}(\Omega)}.
$$
\end{lemma}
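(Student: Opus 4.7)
The plan is to take $\phi_m$ as a smooth cutoff built from the distance function $d(x):=\operatorname{dist}(x,\pa\Omega)$. Fix once and for all a nondecreasing $\psi\in C^\infty(\R)$ with $\psi\equiv 0$ on $(-\infty,1]$ and $\psi\equiv 1$ on $[2,\infty)$. Because $\pa\Omega$ is only piecewise $C^2$, $d$ is merely Lipschitz near the singular set $\Sigma\subset\pa\Omega$ of edges and vertices, so I first replace $d$ by a mollification $d_m\in C^\infty(\overline\Omega)$ on a scale $\ll 1/m$ and set $\phi_m:=\psi(md_m)$. This yields a $C^2$-cutoff with $0\le\phi_m\le 1$, vanishing on $\{d<1/(2m)\}$, converging locally uniformly to $1$, whose gradient is supported on a boundary shell $S_m$ of volume $|S_m|\lesssim 1/m$ and satisfies $|\nabla\phi_m|\le Cm$ there. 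Moreover, away from a $C/m$-neighborhood of $\Sigma$ one has $\nabla\phi_m=-m\psi'(md_m)\tilde\nu+mR_m$, where $\tilde\nu\in C^1$ is a smooth extension of the outward unit normal into the tubular neighborhood of the smooth parts of $\pa\Omega$ and $\|R_m\|_\infty\to 0$.

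The main analytic input is the fractional Hardy inequality on Lipschitz domains: for $s=1/2+\delta\in(1/2,1)$ and $u\in H^s(\Omega)$ with vanishing trace,
\[
\int_\Omega\frac{|u(x)|^2}{d(x)^{1+2\delta}}\,dx\le C(\delta)\,\|u\|_{H^{1/2+\delta}(\Omega)}^2.
\]
I would apply this to $u:=g\cdot\tilde\nu$, which lies in $H^{1/2+\delta}(\Omega)$ with $\|u\|_{H^s}\le C\|g\|_{H^s}$ (multiplication by a $C^1$ function preserves $H^s$ for $s\le 1$), and whose trace on $\pa\Omega$ equals $g\cdot\nu=0$ a.e., so that $u\in H^s_0(\Omega)$. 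Plugging the decomposition of $\nabla\phi_m$ into $g\cdot\nabla\phi_m$ and applying H\"older with exponents $2/p$ and $2/(2-p)$ (valid since $p<1/(1-\delta)<2$),
\[
m^p\int_{S_m}|u|^p\,dx\le Cm^p\bigg(\int_\Omega\frac{|u|^2}{d^{1+2\delta}}\bigg)^{p/2}\bigg(\int_{S_m}d^{(1+2\delta)p/(2-p)}\,dx\bigg)^{(2-p)/2}.
\]
Since $d\sim 1/m$ on $S_m$ and $|S_m|\lesssim 1/m$, the shell factor is of order $m^{-(1+p\delta)}$, and the whole quantity reduces to $C\|g\|_{H^{1/2+\delta}}^p\,m^{p(1-\delta)-1}$, uniformly bounded precisely in the range $p\le 1/(1-\delta)$ of the lemma. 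The remainder term $m^p\|R_m\|_\infty^p\int_{S_m}|g|^p\,dx$ is handled analogously once the mollification scale is chosen fine enough to force $m\|R_m\|_\infty\to 0$.

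The main obstacle is the polyhedral geometry: constructing $\tilde\nu$ and $\nabla d_m$ consistently across the singular set $\Sigma$ and bounding the contribution of its $C/m$-neighborhood, where the boundary normal is undefined and the Hardy inequality cannot be applied to $g\cdot\tilde\nu$ directly. My plan is to localize via a partition of unity subordinate to a covering of $\pa\Omega$ by smooth patches, define $\tilde\nu$ and $d_m$ patchwise by local flattening, and treat the singular-set contribution separately using that $\Sigma$ has Hausdorff dimension at most $1$, so that its tubular neighborhood has volume $O(m^{-2})$ in $\R^3$; combined with the Sobolev embedding $H^{1/2+\delta}(\Omega)\hookrightarrow L^{3/(1-\delta)}(\Omega)$, and, if necessary for $\delta$ close to $1/2$, a secondary Hardy-type inequality with respect to $\operatorname{dist}(x,\Sigma)$, this contribution can be absorbed within the stated range of $p$.
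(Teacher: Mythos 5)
Your analytic core coincides with the paper's: both arguments rest on a fractional Hardy inequality for the normal component of $g$ (which has vanishing trace because $g\cdot\nu=0$), followed by a H\"older split of the singular weight $d^{-1}=d^{-s}\cdot d^{-(1-s)}$ with exponents $2/p$ and $2/(2-p)$, and in both the threshold $p<1/(1-\delta)$ comes from the integrability of $d^{-2p(1-s)/(2-p)}$ near the boundary. The decisive difference is the construction of $\phi_m$. The paper does \emph{not} use the global distance to $\pa\Omega$: it writes $\pa\Omega=S_1\cup\dots\cup S_\ell$ with $S_i$ of class $C^2$, takes (suitably modified) globally $C^2$ distance functions $d_i(x)=\operatorname{dist}(x,S_i)$, and sets $\phi_m=\prod_{i=1}^\ell h_m(d_i)$ with a cutoff satisfying $h_m'(s)\le 6/s$. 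This gives $|g\cdot\na\phi_m|\le 6\sum_k d_k^{-1}|g\cdot\na d_k|$, and each summand is handled by the Lions--Magenes Hardy inequality for $H^{s}_{S_k}(\Omega)$, using that $\na d_k=\nu$ on $S_k$ so that $g\cdot\na d_k$ has zero trace on $S_k$. The edges and vertices of the curvilinear polyhedron never appear: the $k$-th factor is controlled by the weight $d_k^{-1}$ alone, irrespective of the geometry of the other faces.

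By mollifying the global distance function you are forced to confront the singular set $\Sigma$ head on, and that part of your argument is a plan rather than a proof; this is where the genuine gap lies. Quantitatively, the volume count $|N_m|=O(m^{-2})$ for the $C/m$-neighbourhood of $\Sigma$ together with $H^{1/2+\delta}(\Omega)\hookrightarrow L^{3/(1-\delta)}(\Omega)$ only bounds $m^p\int_{N_m}|g|^p\,dx$ for $p\le 6/(5-2\delta)$, which falls short of the required range $p<1/(1-\delta)$ whenever $\delta>1/4$ (e.g.\ for $\delta$ near $1/2$ it gives $p\le 3/2$ instead of $p<2$). The ``secondary Hardy-type inequality with respect to $\operatorname{dist}(x,\Sigma)$'' that you invoke to close this gap is the load-bearing step for the full range of $p$, yet it is neither stated precisely nor justified; what is needed is a codimension-two fractional Hardy inequality of the form $\|u/\operatorname{dist}(\cdot,\Sigma)^{s}\|_{L^2(\Omega)}\le C\|u\|_{H^{s}(\Omega)}$ without any trace condition (available because $2s=1+2\delta<2$), and with it the exponent count does work out to $p\le 4/(3-2\delta)\ge 1/(1-\delta)$. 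Either supply that inequality with a proof or reference, or --- much more economically --- adopt the product-of-face-cutoffs construction, which makes the singular set invisible and reduces everything to the single Hardy inequality you already use on the smooth part.
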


\begin{proof}
Since $\pa\Omega$ consists of surfaces $S_1,\ldots,S_\ell$ of class $C^2$,
we may assume that 
the distance function $d_i(x):=\operatorname{dist}(x,S_i)$ for $x\in\Omega$,
$i=1,\ldots,\ell$ is also of class $C^2$, i.e.\ $d_i\in C^2(\overline\Omega)$.
Let $m>0$ and introduce a function $h\in C^2(\R_+)$ such that $0\le h\le 1$
in $\R_+$, $h_m(s)=1$ if $s>2/m$, $h_m(s)=0$ for $0\le s<1/m$, and $h_m'(s)\le 3m$
for $s\in\R_+$. Then $h_m'(s)\le 3m\le 6/s$ for $1/m\le s\le 2/m$ and, in fact,
for all $s\in\R_+$ (since $h_m'=0$ otherwise).

We define $\phi_m(x):=\prod_{i=1}^\ell h_m(d_i(x))$. By definition of $h_m$, we have
$\phi_m(x)=0$ for $x\in\operatorname{dist}(x,\pa\Omega)<1/m$. The gradient 
of $\phi_m$ is given by
$$
  \na\phi_m = \sum_{k=1}^\ell h'_m(d_k)\na d_k\prod_{i\neq k}h_m(d_i).
$$
Using $h_m'(d_k) \le 6d_k^{-1}$ and $h_m\le 1$, we find that
\begin{equation}\label{3.aux2}
  |g\cdot\na\phi_m| \le 6\sum_{k=1}^\ell d_k^{-1}|g\cdot\na d_k|.
\end{equation}

We now estimate the right-hand side.
The properties of the distance function imply that $\nu(x)=\na d_i(x)$ for
$x\in S_i$. Moreover, as $d_i\in C^2(\overline\Omega)$,
\begin{align}
  \|g\cdot\na d_i\|_{H^{1/2+\delta}(\Omega)}
	&\le \|\na d_i\|_{L^\infty(\Omega)}\|g\|_{H^{1/2+\delta}(\Omega)}
	+ \|D^2 d_i\|_{L^\infty(\Omega)}\|g\|_{L^2(\Omega)} \nonumber \\
	&\le C\|g\|_{H^{1/2+\delta}(\Omega)}. \label{3.aux}
\end{align}
The interior trace operator $\operatorname{tr}_i:H^{1/2+\delta}(\Omega)
\to H^\delta(S_i)$, $\operatorname{tr}_i(v)=v|_{S_i}$, is bounded, 
so the boundary condition $g\cdot\nu=0$ on $S_i$ is valid almost everywhere.
This shows that
$$
  g\cdot\na d_i\in H^{1/2+\delta}_{S_i}(\Omega)
	:= \big\{f\in H^{1/2+\delta}(\Omega):\operatorname{tr}_i f=0
	\mbox{ in }H^\delta(S_i)\big\}, \quad i=1,\ldots,\ell.
$$
Theorem 11.3 in \cite[Chapter 1, \S 11]{LiMa72} states that the linear mapping
$H^{s}_{S_i}(\Omega)\to L^2(\Omega)$, $u\mapsto d_i^{-s}u$, with $s=1/2+\delta$
is continuous. This property and estimate \eqref{3.aux} imply that
\begin{equation}\label{3.aux3}
  \|d_i^{-s}g\cdot\na d_i\|_{L^2(\Omega)}
	\le C\|g\cdot\na d_i\|_{H^s(\Omega)} \le C\|g\|_{H^{1/2+\delta}(\Omega)}.
\end{equation}
Consequently, using \eqref{3.aux2} and the H\"older inequality, we obtain for 
$1\le p<2/(3-2s)=1/(1-\delta)$,
\begin{align*}
  \int_\Omega|g\cdot\na\phi_m|^p dx
	&\le C\sum_{k=1}^\ell\int_\Omega d_k^{-ps}|g\cdot\na d_k|^p d_k^{-p(1-s)} dx \\
	&\le C\sum_{k=1}^\ell\bigg(\int_\Omega d_k^{-2s}|g\cdot\na d_k|^2 dx\bigg)^{p/2}
	\bigg(\int_\Omega d_k^{-2p(1-s)/(2-p)}dx\bigg)^{1-p/2}.
\end{align*} 
It holds that $-2p(1-s)/(2-p)>-1$ if and only if $p<1/(1-\delta)$, and under this
condition, the last integral is finite. Thus, we infer from \eqref{3.aux3} that
$$
  \int_\Omega|g\cdot\na\phi_m|^p dx \le C(\delta)\sum_{k=1}^\ell
	\|d_k^{-s} g\cdot\na d_k\|_{L^2(\Omega)}^p 
	\le C(\delta)\|g\|_{H^{1/2+\delta}(\Omega)}^p,
$$
which finishes the proof. 
\end{proof}

We formulate \eqref{1.w} as
\begin{equation}\label{3.w}
  \pa_t w - \diver(a(w)\na w)=0 \quad\mbox{in }\Omega\times(0,T),
\end{equation}
where $a(w):=\kappa wG'(w)$ for $w\ge 0$, and we set
\begin{equation}\label{1.W12}
  W_2^1(Q_T) := \big\{f\in L^2(0,T;H^1(\Omega)):\pa_t f\in L^2(0,T;H^1(\Omega)')\big\}.
\end{equation}
Observe that we may allow for suitable porosity coefficients $\kappa=\kappa(w)$
at this point.
The following result shows that problem \eqref{1.w}--\eqref{1.bicw} is uniquely
solvable and that the vector field $v=-\kappa G'(w)\na w$ has some regularity
properties.

\begin{proposition}[Existence and regularity for \eqref{1.w}]\label{prop.v}
Under the assumptions of Theorem \ref{thm.weak}, problem \eqref{1.w}--\eqref{1.bicw}
has a unique solution $w\in W_2^1(Q_T)\cap L^\infty(0,T;L^\infty(\Omega))$
satisfying \eqref{2.Lambda}. Moreover, $v=-\kappa G(w)\na w$ satisfies the following
properties:
\begin{itemize}
\item $v\in L^{4/3}(0,T;H^1(\Omega'))$ for any domain $\Omega'$ compactly included
in $\Omega$;
\item $v\in L^2(0,T;L^q(\Omega))$ and $\diver v\in L^{2-2/q}(Q_T)$ for some 
$3 < q\le 6$;
\item $v\cdot\na\phi_m\to 0$ strongly in $L^1(Q_T)$ as $m\to\infty$ for $(\phi_m)$
constructed in Lemma \ref{lem.approx}.
\end{itemize}
\end{proposition}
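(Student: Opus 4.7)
The plan is to treat the porous-medium-type equation \eqref{1.w} as uniformly parabolic on the invariant strip $[\Lambda_*,\Lambda^*]$ and to extract the regularity of $v=-\kappa\nabla G(w)$ via energy estimates for the potential $A(w):=\int_0^w\kappa\tau G'(\tau)\,d\tau$, interior elliptic regularity, and a Meyers-type higher integrability estimate up to the boundary. Existence, uniqueness, and \eqref{2.Lambda} come from standard techniques: I would mollify $\rho^0$ to smooth data $\rho^0_n$, so that $w^0_n=\Lambda(\rho^0_n)\in[\Lambda_*,\Lambda^*]$ and $G(w^0_n)\to G(w^0)$ in $H^1(\Omega)$, apply Theorem \ref{thm.class} on a smooth inner approximation of $\Omega$ to obtain classical solutions $w_n$ confined to $[\Lambda_*,\Lambda^*]$ by the weak maximum principle, and pass to the limit using the uniform bounds of the next step together with Aubin--Lions. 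Uniqueness follows from the $L^1$-contraction for $\partial_t w=\Delta A(w)$ with $A$ strictly increasing, either by duality or by Kruzhkov's doubling of variables.

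The central energy estimate is obtained by testing $\partial_t w=\Delta A(w)$ against $\partial_t A(w)=a(w)\partial_t w$, where $a(s)=\kappa sG'(s)$, and integrating by parts using $\nabla A(w)\cdot\nu=0$:
$$
\int_{Q_T}a(w)(\partial_t w)^2\,dx\,dt+\tfrac12\|\nabla A(w)(T)\|_{L^2(\Omega)}^2=\tfrac12\|\nabla A(w^0)\|_{L^2(\Omega)}^2,
$$
the right-hand side being finite because $\nabla A(w^0)=\kappa w^0\nabla G(w^0)$ and $w^0$ is bounded. Since $a\ge a_*>0$ on $[\Lambda_*,\Lambda^*]$, this yields $\partial_t w\in L^2(Q_T)$ and $\nabla A(w)\in L^\infty(0,T;L^2(\Omega))$; because $\nabla A(w)=-wv$ and $w\ge\Lambda_*>0$, one already obtains $v\in L^\infty(0,T;L^2(\Omega))$.

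For the interior $H^1$ bound on $v$, the equation read at fixed $t$ is the Poisson problem $-\Delta A(w)=-\partial_t w$ with homogeneous Neumann data, and interior elliptic regularity gives $A(w)\in L^2(0,T;H^2_{\mathrm{loc}}(\Omega))$; using that $a\in C^2$ by (A3), the chain rule transfers this to $w\in L^2(0,T;H^2_{\mathrm{loc}})$. Sobolev embedding in $\R^3$ then produces $\nabla w\in L^2(0,T;L^6_{\mathrm{loc}})$, and interpolating with $\nabla w\in L^\infty(0,T;L^2)$ from the energy estimate yields $|\nabla w|^2\in L^{4/3}(0,T;L^2_{\mathrm{loc}})$ --- exactly the integrability matching the quadratic term in the pointwise identity $\partial_j v_i=-w^{-1}\partial_i\partial_j A(w)+w^{-2}a(w)\,\partial_iw\,\partial_jw$. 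Combined with $D^2A(w)\in L^2(0,T;L^2_{\mathrm{loc}})$, this gives $v\in L^{4/3}(0,T;H^1(\Omega'))$ for every $\Omega'$ compactly included in $\Omega$.

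For the global integrability bounds I would invoke a Meyers-type higher integrability theorem for the Neumann Laplacian on the curvilinear polyhedron $\Omega$ (this is where the piecewise-$C^2$ assumption enters): it delivers $\|\nabla A(w)(\cdot,t)\|_{L^q(\Omega)}\le C(\|\partial_t w(\cdot,t)\|_{L^2(\Omega)}+\|A(w)(\cdot,t)\|_{H^1(\Omega)})$ for some $q\in(3,6]$, hence $v\in L^2(0,T;L^q(\Omega))$. For $\diver v$, the identity $w\diver v=-\partial_t w-v\cdot\nabla w$ coming from $\partial_t w+\diver(wv)=0$, combined with the two-sided bound on $w$ and H\"older's inequality using $v\in L^2(0,T;L^q)$ and $\nabla w$ in an appropriate interpolated space, yields $\diver v\in L^{2-2/q}(Q_T)$. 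Finally, the Neumann condition gives $v\cdot\nu=0$ on $\partial\Omega$; a small fractional improvement beyond the energy bound, i.e.\ $v\in L^s(0,T;H^{1/2+\delta}(\Omega))$ for some $s>1$ and $\delta\in(0,1/2)$, obtained from Neumann regularity for $A(w)$ on the polyhedron, lets me apply Lemma \ref{lem.approx} to bound $\|v\cdot\nabla\phi_m\|_{L^p(\Omega)}$ uniformly in $m$ for some $p>1$, and Vitali's theorem concludes since $\nabla\phi_m\to 0$ pointwise in $\Omega$. The main obstacle is precisely this final global higher-integrability/fractional regularity step on a non-smooth domain: it is what forces the piecewise-$C^2$ hypothesis on $\partial\Omega$ and pins down the specific exponent $q>3$; all the other properties of $v$ then follow in a comparatively routine fashion.
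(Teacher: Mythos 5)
Your regularity argument follows essentially the same architecture as the paper's: uniform parabolicity from the maximum principle on $[\Lambda_*,\Lambda^*]$; the energy estimate obtained by testing with $\pa_t A(w)$ (the paper works with the Kirchhoff variable $u=B(w)=A(w)$ and gets the identical bounds $\pa_t u\in L^2(Q_T)$, $\na u\in L^\infty(0,T;L^2)$); interior elliptic regularity for $-\Delta A(w)=-\pa_t w$ plus the interpolation $L^\infty L^1\cap L^1L^3\hookrightarrow L^{4/3}L^2$ for $|\na w|^2$, giving $v\in L^{4/3}(0,T;H^1(\Omega'))$; a global $L^q$ gradient estimate with $q>3$ on the non-smooth domain; and fractional $H^{1/2+\delta}$ regularity of $\na A(w)$ fed into Lemma \ref{lem.approx} for the last bullet. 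The two black boxes you invoke are precisely the external inputs the paper uses: your ``Meyers-type'' estimate is case (c) of Theorem 1.6 in Zanger \cite{Zan00} (valid on Lipschitz domains, with $q$ slightly above $3$ --- note the admissible $q$ is pinned by the Lipschitz constant, not by the piecewise-$C^2$ structure, which is needed instead for the distance functions in Lemma \ref{lem.approx}), and your ``Neumann regularity on the polyhedron'' is Proposition 3.7 of Amrouche--Bernardi--Dauge--Girault \cite{ABDG98}. Both exist, so that part of your plan is sound.

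The one genuinely different -- and weakest -- piece is the existence construction. You propose to mollify the data and apply Theorem \ref{thm.class} on a smooth \emph{inner approximation} $\Omega_n\subset\subset\Omega$. This is delicate: the approximate solutions then satisfy the Neumann condition on $\pa\Omega_n$, not on $\pa\Omega$, and homogeneous Neumann problems are not in general stable under inner domain perturbation without quantitative control of $\Omega\setminus\Omega_n$ and of the traces there; moreover the classical theorem also requires the compatibility condition $\na p(\rho^0)\cdot\nu=0$ on the approximating boundary, which your mollified data need not satisfy. The paper avoids all of this by truncating $a$ (which need not be monotone), performing the Kirchhoff transform, and running a Galerkin scheme directly on $\Omega$ in the weak $H^1(\Omega)$ formulation, where the Neumann condition is encoded variationally; the maximum principle then removes the truncation. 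Your energy estimate and $L^1$-contraction uniqueness are fine, but if you keep the domain-approximation route you must either justify the convergence of the Neumann problems on $\Omega_n$ to the one on $\Omega$ or switch to an approximation that stays on the fixed domain.
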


\begin{proof}
{\em Step 1: Existence for \eqref{1.w}.} 
The existence of weak solutions to \eqref{1.w}--\eqref{1.bicw} 
can be shown by a standard approximation procedure, but 
since $a(w)$ does not need to be monotone, we need to be careful and therefore
present a sketch of the proof. We introduce the truncated functions
$$
  a_k(s) := \left\{\begin{array}{ll}
	a(k) & \quad\mbox{if }s\ge k, \\
	a(s) & \quad\mbox{if }1/k<s<k, \\
	a(1/k) & \quad\mbox{if }0\le s\le 1/k,
	\end{array}\right.
$$
where $k\in\N$. The functions $B_k:\R_+\to\R_+$,
$B_k(s):=\int_0^s a_k(z)dz$, are bi-Lipschitz transformations. 
We consider the approximated problem
\begin{equation}\label{3.u}
  \frac{1}{a_k(B_k^{-1}(u))}\pa_t u = \Delta u, \quad
	\na u\cdot\nu=0\quad\mbox{on }\pa\Omega, \quad u(0)=B_k(w^0)\quad\mbox{in }\Omega.
\end{equation}
If a solution to this problem is given, then $w_k:=B_k^{-1}(u)$ is a solution
to \eqref{3.w} with $a$ replaced by $a_k$. 

Problem \eqref{3.u} can be solved by the Galerkin method. Indeed, let $(v_i)
\subset W^{1,\infty}(\Omega)$ be an orthonormal basis of $H^1(\Omega)$ and
$X_n=\operatorname{span}\{v_1,\ldots,v_n\}$ for $n\in\N$.
The coefficients $\alpha_j$ of $u_n(x,t) = \sum_{j=1}^n \alpha_j(t)v_j(x)$ solve the
system of ordinary differential equations
\begin{equation}\label{3.ode}
  \sum_{j=1}^n A_{ij}(\alpha)\alpha_j' + \sum_{j=1}^n M_{ij} \alpha_j 
	= 0\quad\mbox{in }(0,T], \quad \alpha_j(0)=\alpha_{n,j}^0,
\end{equation}
where we assumed that $u_n^0:=\sum_{j=1}^n \alpha_{n,j}^0v_j(x)$ 
converges to $B_k(w^0)$
in $H^1(\Omega)$ as $n\to\infty$, and the matrices $(A_{ij}(\alpha))$ and $(M_{ij})$
are given by
$$
  A_{ij}(\alpha) = \int_\Omega\frac{v_iv_j}{a_k(B_k^{-1}(u_n))}dx, \quad
	M_{ij} = \int_\Omega\na v_i\cdot\na v_j dx.
$$
Since the coefficients $a_k\circ B_k^{-1}$ are bounded from below and above,
the properties of $(v_i)$ imply that $(A_{ij}(\alpha))$ is strictly positive definite.
Thus, \eqref{3.ode} possesses a global solution $\alpha\in C^1([0,T];\R^n)$
for any $T>0$.

For the limit $n\to\infty$, we need some uniform estimates. We multiply
\eqref{3.ode} by $\alpha_i'$ and sum over $i=1,\ldots,n$ leading to
$$
  \int_\Omega\frac{(\pa_t u_n)^2}{a_k(B_k^{-1}(u_n))}dx
	+ \frac12\frac{d}{dt}\int_\Omega|\na u_n|^2 dx = 0.
$$
Integration over $(0,t)$ for $t\le T$ yields the uniform bounds
$$
  \|\pa_t u_n\|_{L^2(Q_T)}\le C(k)\|u_n^0\|_{H^1(\Omega)}, \quad
	\sup_{0<t<T}\|\na u_n(t)\|_{L^2(\Omega)} \le \|u_n^0\|_{H^1(\Omega)}.
$$
The first estimate implies a uniform bound for $u_n$ in $L^2(Q_T)$ since
$u_n(t)=u_n(0)+\int_0^t \pa_t u_nds$.
By the Aubin--Lions lemma, there exists a subsequence which is not relabeled
such that $u_n\to u$ strongly in $L^2(Q_T)$, $\pa_t u_n\rightharpoonup \pa_t u$,
$\na u_n\rightharpoonup\na u$ weakly in $L^2(Q_T)$ as $n\to\infty$, 
and the limit $u$ satisfies
\begin{equation}\label{3.weaku}
  \int_0^T\int_\Omega \frac{\pa_t u\phi}{a_k(B_k^{-1}(u))}dxdt
	+ \int_0^T\int_\Omega\na u\cdot\na \phi dxdt = 0
\end{equation}
for all $\phi\in L^2(0,T;H^1(\Omega))$. Moreover, the initial condition
$u(0)=B_k(w^0)$ is fulfilled in $H^1(\Omega)$ and the bounds
\begin{equation}\label{3.estu}
  \|\pa_t u\|_{L^2(Q_T)}\le C(k)\|B_k(w^0)\|_{H^1(\Omega)}, \quad
	\sup_{0<t<T}\|\na u(t)\|_{L^2(\Omega)} \le \|B_k(w^0)\|_{H^1(\Omega)}
\end{equation}
hold. The function $w:=B_k^{-1}(u)\in H^1(\Omega)$ satisfies the equation
$$
  \int_0^T\int_\Omega\pa_t w\phi dxdt + \int_0^T\int_\Omega a_k(w)\na w\cdot
	\na\phi dxdt = 0
$$
for all $\phi\in L^2(0,T;H^1(\Omega))$ and the initial condition $w(0)=w^0$
in $H^1(\Omega)$. Thus, by the weak maximum principle, $\Lambda_*\le w\le\Lambda^*$
in $Q_T$ (see \eqref{2.Lambda} for the definition of $\Lambda_*$ and $\Lambda^*$).
Therefore, there exists $k_0\in\N$, depending only on $\Lambda_*$ and $\Lambda^*$ 
such that $a_{k_0}(w)=a(w)$ in $Q_T$. The function $B(s)=\int_0^s a(z)dz$ is
identical to $B_{k_0}$ on the range of $w$, since $a_{k_0}$ is identical to $a$
on the range of $w$. We conclude that $u_*\le u\le u^*$ in $Q_T$, where
$u_*:=B(\Lambda_*)$ and $u^*:=B(\Lambda^*)$.

{\em Step 2: Regularity.} We deduce from the bounds \eqref{3.estu} with $k=k_0$ 
and the lower and upper bounds for $u$ and $w$ that
$\pa_t w$ is bounded in $L^2(Q_T)$ and $\na w$ is bounded in 
$L^\infty(0,T;L^2(\Omega))$. Furthermore, by choosing $k=k_0$ and the
test function $\phi(x,t)=\zeta(t)\eta(x)$ in \eqref{3.weaku}, where $\zeta(t)$
approximates the delta distribution at $t$, we see that $u$ solves
\begin{equation}\label{3.weaku2}
  \int_\Omega\frac{\pa_t u\eta}{a(B^{-1}(u))}dx 
	+ \int_\Omega\na u(t)\cdot\na\eta dx = 0.
\end{equation}
Local regularity for elliptic equations \cite[Section 8.3, Theorem 8.9]{GiTr83}
implies that for any domain $\Omega'$ compactly embedded 
in $\Omega$, there exists $C(\Omega',u_*,u^*)>0$ such that
\begin{equation}\label{3.H2}
  \|D^2 u(t)\|_{L^2(\Omega')} \le C(\Omega',u_*,u^*)\|\pa_t u(t)\|_{L^2(\Omega)},
\end{equation}
which shows, using \eqref{3.estu}, that $u\in L^2(0,T;H^2(\Omega'))$.
Applying case (c) of Theorem 1.6 in \cite{Zan00} with $\alpha = 1$ therein,
there exists $\eps > 0$ depending only on the Lipschitz constant of $\pa\Omega$ 
such that for all $2/(1-\eps)\leq q < 3/(1-\eps)$, the gradient 
satisfies the global bound
\begin{equation}\label{3.W1q}
  \|\na u(t)\|_{L^q(\Omega)} \le C\|\pa_t u(t)\|_{L^{3q/(3+q)}(\Omega)}.
\end{equation}
Since $3q/(3+q)\le 2$ for $q\le 6$, we deduce that $u\in L^2(0,T;W^{1,q}(\Omega))$. 

We know from \eqref{3.estu} that $|\na u|^2$ is bounded in $L^\infty(0,T;L^1(\Omega))$.
Moreover, since $|\na u|\in L^2(0,T;H^1_{\rm loc}(\Omega))$, the Sobolev embedding
theorem yields $|\na u|\in L^2(0,T;L^6_{\rm loc}(\Omega))$ (recall that the
space dimension is three). Interpolation with $\theta=3/4$ then shows for $z=|\na u|^2$
and any $\Omega'$ compactly embedded in $\Omega$ that
\begin{equation}\label{3.L43}
  \|z\|_{L^{4/3}(0,T;L^2(\Omega'))}^{4/3}
	\le \int_0^T\|v\|_{L^3(\Omega')}^{4\theta/3}\|z\|_{L^1(\Omega')}^{4(1-\theta)/3}dt
	\le \|z\|_{L^\infty(0,T;L^1(\Omega'))}^{1/3} \int_0^T\|z\|_{L^3(\Omega')}dt
\end{equation}
and consequently $z=|\na u|^2\in L^{4/3}(0,T;L^2_{\rm loc}(\Omega))$.
In order to show a global bound, we combine the regularity properties
$|\na u|^2\in L^\infty(0,T;L^1(\Omega))$, $|\na u|^2\in L^1(0,T;L^{q/2}(\Omega))$
and interpolate with $r=2-2/q$ and $\theta=1/r$ such that
$1/r = (1-\theta)/1+2\theta/q$:
$$
  \|z\|_{L^r(Q_T)}^r 
	\le \int_0^T \|z\|_{L^1(\Omega)}^{r(1-\theta)}\|z\|_{L^{q/2}(\Omega)}^{r\theta}dt
	\le \|z\|_{L^\infty(0,T;L^1(\Omega))}^{r(1-\theta)}
	\int_0^T\|z\|_{L^{q/2}(\Omega)}dt
$$
and hence, $z=|\na u|^2\in L^{2-2/q}(Q_T)$. 

Finally, we consider $v=-\kappa G'(w)\na w$. Introducing 
$f(u):=-\kappa G'(B^{-1}(u))(B^{-1})'(u)$ and recalling that $w=B^{-1}(u)$, 
this means that $v=f(u)\na u$.
The positive lower bound for $u$ and estimate \eqref{3.W1q} then show that
$v\in L^2(0,T;L^q(\Omega))$. It holds that
$$
  \na v = f(u)D^2u+f'(u)\na u\otimes\na u, \quad
	\diver v = f(u)\Delta u + f'(u)|\na v|^2.
$$
Using estimates \eqref{3.H2} and \eqref{3.L43} and the positive lower bound for $u$,
it follows that $|\na v|\in L^{4/3}(0,T;L^2_{\rm loc}(\Omega))$. Furthermore,
$\Delta u = \pa_t u/(a(B^{-1}(u)))\in L^2(Q_T)$ and $|\na u|^2\in L^{2-2/q}(Q_T)$
gives $\diver v\in L^{2-2/q}(Q_T)$.

{\em Step 3: Proof of $v\cdot\na\phi_m\to 0$ strongly in $L^1(Q_T)$.}
Proposition 3.7 in \cite{ABDG98} shows that for some $\delta>0$, it holds that
\begin{equation}\label{3.H12}
  \|\na u(t)\|_{H^{1/2+\delta}(\Omega)} 
	\le C(\Omega,u_*,u^*)\|\pa_t u(t)\|_{L^2(\Omega)}.
\end{equation}
It follows that $\na u(t)$ is defined in $L^2(\pa\Omega)$. Then the weak formulation
\eqref{3.weaku2} implies that $u$ satisfies the Neumann boundary conditions
$\na u(t)\cdot\nu=0$ in $L^2(S_i)$ for $i=1,\ldots,\ell$. Recalling that
$v=f(u)\na u$ and $f(u)$ is globally bounded in $Q_T$, we infer that
$|v\cdot\na\phi_m|\le \|f(u)\|_{L^\infty(Q_T)}|\na u\cdot\na\phi_m|$.
Thus, by Lemma \ref{lem.approx} and estimate \eqref{3.H12}, for $1\le p<1/(1-\delta)$,
\begin{align*}
  \|v\cdot\na\phi_m\|_{L^2(0,T;L^p(\Omega))}
	&\le C(\delta)\|f(u)\|_{L^\infty(Q_T)}\|\na u\|_{L^2(0,T;H^{1/2+\delta}(\Omega))} \\
	&\le C(\delta)\|f(u)\|_{L^\infty(Q_T)}\|\pa_t u\|_{L^2(Q_T)} \le C.
\end{align*}
Again by Lemma \ref{lem.approx}, we have $\na\phi_m\to 0$ a.e.\ in $\Omega$
and in view of the previous estimate, $v\cdot\na\phi_m\to 0$ a.e.\ in $Q_T$.
By dominated convergence, this convergence also holds in $L^1(Q_T)$. This
finishes the proof.
\end{proof}

The next step is to approximate the velocity $v=-\kappa G'(w)\na w$ by smooth
vector fields.

\begin{lemma}[Approximation of $v$]\label{lem.v}
There exists a family of smooth vector fields $v_\eps$ for $\eps>0$ satisfying
$v_\eps\cdot\nu=0$ on $\pa\Omega\times(0,T)$ such that, as $\eps\to 0$,
$$
  v_\eps\to v\quad \mbox{strongly in }L^2(Q_T), \quad
	\diver v_\eps\to \diver v\quad\mbox{strongly in }L^1(Q_T).
$$
\end{lemma}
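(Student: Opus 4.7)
\emph{Approach.} The plan is to combine the boundary cutoffs $\phi_m$ from Lemma \ref{lem.approx} with a standard space--time mollification. The cutoff enforces the no-penetration condition by killing $v$ near $\pa\Omega$, at the price of an error term $v\cdot\na\phi_m$ which Lemma \ref{lem.approx} was designed precisely to control; the mollification then produces smoothness while preserving the boundary condition for free, because the mollified function remains supported strictly inside $\Omega$.

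\emph{Construction.} Set $v_m:=\phi_m v$. Since $\phi_m$ vanishes on $\{\operatorname{dist}(x,\pa\Omega)<1/m\}$, the identity $v_m\cdot\nu=0$ on $\pa\Omega$ is automatic. The bound $|v_m|\le|v|$, the pointwise convergence $\phi_m\to 1$, and $v\in L^2(Q_T)$ from Proposition \ref{prop.v} give $v_m\to v$ in $L^2(Q_T)$ by dominated convergence. Differentiating,
$$\diver v_m=\phi_m\,\diver v+v\cdot\na\phi_m,$$
both pieces are controlled: $\phi_m\,\diver v\to\diver v$ in $L^1(Q_T)$ by dominated convergence (since $\diver v\in L^{2-2/q}(Q_T)\hookrightarrow L^1(Q_T)$), while $v\cdot\na\phi_m\to 0$ in $L^1(Q_T)$ by the third bullet of Proposition \ref{prop.v}. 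For fixed $m$, $v_m$ is supported in the interior strip $\{\operatorname{dist}(x,\pa\Omega)\ge 1/m\}\times[0,T]$; extending $v_m$ in time by even reflection across $t=0$ and $t=T$ and convolving with a standard space--time mollifier $\eta_\delta$ of radius $\delta$ gives $v_{m,\delta}:=\eta_\delta * v_m$. For $\delta<1/(2m)$ the field $v_{m,\delta}\in C^\infty(\overline{Q}_T;\R^3)$ is still compactly supported away from $\pa\Omega$, so $v_{m,\delta}\cdot\nu=0$ trivially, and standard mollification theory yields $v_{m,\delta}\to v_m$ in $L^2(Q_T)$ together with $\diver v_{m,\delta}=\eta_\delta *\diver v_m\to\diver v_m$ in $L^1(Q_T)$ as $\delta\to 0$. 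A diagonal extraction $m(\eps)\to\infty$, $\delta(\eps)\to 0$ with $\delta(\eps)<1/(2m(\eps))$ then yields the desired family $v_\eps:=v_{m(\eps),\delta(\eps)}$.

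\emph{Main difficulty.} The only delicate point is that the cutoff artifact $v\cdot\na\phi_m$ must converge to $0$ in $L^1(Q_T)$ even though $|\na\phi_m|$ blows up like $d(x,\pa\Omega)^{-1}$ near $\pa\Omega$ and $\diver v$ only lies in $L^{2-2/q}(Q_T)$. This is paid for by the vanishing normal trace $v\cdot\nu=0$ on $\pa\Omega$ together with the Hardy-type inequality encoded in Lemma \ref{lem.approx} (and used through the $H^{1/2+\delta}$-trace bound of Proposition \ref{prop.v}); here we simply invoke that statement as a black box. Once this compensation is in hand, no further ingredient beyond routine mollification is needed.
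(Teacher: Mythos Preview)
Your proof is correct and follows essentially the same route as the paper: multiply $v$ by the boundary cutoff $\phi_m$ to kill the normal trace, then mollify to gain smoothness, and control the cutoff artifact $v\cdot\na\phi_m$ via Proposition~\ref{prop.v}. The only cosmetic differences are that the paper ties the two parameters together from the outset (taking $\psi_\eps:=\phi_{2/\eps}$ and mollifying in space with radius $\eps$, so no diagonal extraction is needed) and mollifies only in the spatial variable, whereas you mollify in space--time after an even time reflection and then extract diagonally; neither change affects the argument.
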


\begin{proof}
Let $(\phi_m)$ be the family constructed in Lemma \ref{lem.approx} and define
$\psi_\eps:=\phi_{2/\eps}$ for $\eps>0$. Then $\phi_\eps$ takes values in $[0,1]$ and 
$\psi_\eps(x)=0$ for all $x\in\Omega$ such that 
$\operatorname{dist}(x,\pa\Omega)<\eps/2$. Furthermore, let 
$\Phi\in C^\infty(\overline{B_1(0)})$ satisfy $\int_{B_1(0)}\Phi(z)dz=1$,
where $B_1(0)$ is the unit ball in $\R^3$. Then we define
$$
  v_\eps(x,t) = \int_{B_1(0)}\Phi(z)v(x+\eps z,t)\psi_\eps(x+\eps z)dz.
$$
Since $\psi_\eps \to 1$ uniformly on every compact subset of $\Omega$ as $\eps\to 0$, 
by Lemma \ref{lem.approx}, we have $v_\eps\to v$ strongly in $L^2(Q_T)$. Moreover,
\begin{align*}
  \diver v_\eps(x,t) &= \int_{B_1(0)}\Phi(z)\diver v(x+\eps z,t)\psi_\eps(x+\eps z)dz \\
	&\phantom{xx}{}+ \int_{B_1(0)}\Phi(z)v(x+\eps z)\cdot\na\psi_\eps(x+\eps z)dz.
\end{align*}
The first integral converges to $\diver v$ strongly in $L^{2-2/q}(Q_T)$ since
$\psi_\eps\nearrow 1$ in $\Omega$, and the second integral converges to zero
strongly in $L^1(Q_T)$, due to the fact that $v\cdot\na\phi_{2/\eps}\to 0$ strongly
in $L^1(Q_T)$ by Lemma \ref{prop.v}. 
\end{proof}

We also approximate the initial densities: Let $\rho^{0,\eps}\in 
C^1(\overline\Omega;\R^N)$ for $0<\eps<1$ be approximations of $\rho^0$ such that
\begin{align*}
  & (1-\eps)\inf_{y\in\Omega}\rho_i^0(y)\le \rho_i^{0,\eps}(x)
	\le (1+\eps)\sup_{y\in\Omega}\rho_i^0(y), \quad x\in\Omega, \\
  & \rho^{0,\eps}\to\rho^0\quad\mbox{strongly in }L^2(\Omega;\R^N)\mbox{ as }\eps\to 0.
\end{align*}
Furthermore, we use the characteristic curves $\Phi_\eps(s;x,t)$ associated to $v_\eps$,
$$
  \Phi_\eps'(s;t,x) = v_\eps(\Phi(s;t,x),s)\quad\mbox{for }s\in(0,T), \quad
	\Phi(t;t,x) = x
$$
(see Section \ref{sec.class}), to solve 
\begin{equation*}%\label{3.ueps}
  \pa_t u^\eps_i + v_\eps\cdot\na u^\eps_i = 0\quad\mbox{in }Q_T,
	\quad u^\eps_i(0) = u_i^{0,\eps} := \frac{\rho_i^{0,\eps}}{\Lambda(\rho^{0,\eps})},
	\quad i=1,\ldots,N.
\end{equation*}
These equations correspond to \eqref{1.u} but with the velocity $v$ replaced
by $v_\eps$. They can be solved explicitly in terms of the characteristic curves, 
\begin{equation}\label{3.ueps}
  u_\eps(x,t) = u^{0,\eps}(\Phi_\eps(0;t,x)) 
	= \frac{\rho^{0,\eps}(\Phi_\eps(0;t,x))}{\Lambda(\rho^{0,\eps}(\Phi_\eps(0;t,x)))},
	\quad (x,t)\in Q_T,
\end{equation}
which shows that $u_\eps$ belongs to $C^1(\overline{Q}_T;\R^N)$ and $\Lambda(u_\eps)=1$
in $Q_T$. Furthermore, we deduce from the growth condition in Assumption (A4) that
$$
  r_0(1-\eps)\sum_{i=1}^N\inf_{y\in\Omega}\rho_i^0(y)
	\le \Lambda(\rho^{0,\eps}(x)) \le r_1(1+\eps)\sum_{i=1}^N\sup_{y\in\Omega}\rho_i^0(y)
$$
for $x\in\Omega$ and consequently
\begin{equation}\label{3.bueps}
  \frac{1-\eps}{r_1(1+\eps)}\frac{\inf_{y\in\Omega}\rho_i^0(y)}{\sum_{i=1}^N
	\sup_{y\in\Omega}\rho_i^0(y)} \le u_i^\eps(x,t)
	\le \frac{1+\eps}{r_0(1-\eps)}\frac{\sup_{y\in\Omega}\rho_i^0(y)}{\sum_{i=1}^N
	\inf_{y\in\Omega}\rho_i^0(y)}
\end{equation}
for $(x,t)\in Q_T$. 

Next, we define the approximate densities $\rho_i^\eps:=w u_i^\eps$, 
where $i=1,\ldots,N$ and $w$ is the solution to the porous-medium-type equation
\eqref{1.w} from Proposition \ref{prop.v}. We prove that a subsequence of
$\rho_i^\eps$ converges to a renormalized solution to the mass continuity
equation \eqref{1.cont}. For the following result, we recall definition 
\eqref{1.W12} of the space $W_2^1(Q_T)$.

\begin{proposition}[Convergence of $\rho_i^\eps$]\label{prop.rhoeps}
The family $(\rho^\eps)\subset W_2^1(Q_T;\R^N)\cap L^\infty(Q_T;\R^N)$ is bounded
in $L^\infty(Q_T;\R^N)$ and there exists $c_1>0$ such that for all $0<\eps<\eps_0$,
$$
  \inf_{(x,t)\in Q_T}\rho_i^\eps(x,t)\ge c_1, \quad i=1,\ldots,N.
$$
There exists a subsequence of $(\rho^\eps)$ (not relabeled) such that
$\rho^\eps\to\rho$ strongly in $L^2(Q_T;\R^N)$. 
The limit $\rho=(\rho_1,\ldots,\rho_n)$ satisfies 
$\rho\in L^\infty(Q_T;\R^N)$ with $\rho_i\ge c_1$ in $Q_T$, 
and $\rho$ is a renormalized solution to \eqref{1.cont}, i.e.,
\begin{align*}
  -\int_0^T\int_\Omega & b(\rho)\pa_t\zeta dxdt
	- \int_0^T\int_\Omega b(\rho)\na\zeta\cdot v dxdt \\
	&= \int_\Omega b(\rho^0)\zeta(0)dx 
	- \int_0^T\int_\Omega\big(b'(\rho)\cdot\rho-b(\rho)\big)\zeta\diver v dxdt
\end{align*}
is satisfied for all $\zeta\in C_0^1(\overline{Q}_T)$ 
and $b\in C^1(\R^N)$, where $v=-\kappa G'(w)\na w$ and $w$ is the weak solution to
\eqref{1.w}--\eqref{1.bicw}.
\end{proposition}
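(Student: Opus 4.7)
The plan for Proposition \ref{prop.rhoeps} consists of three stages, exploiting throughout the explicit factorization $\rho_i^\eps = w u_i^\eps$ with $w$ independent of $\eps$.

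\textbf{Stage 1 (uniform bounds).} I combine the pointwise inequalities $\Lambda_* \le w \le \Lambda^*$ from \eqref{2.Lambda} with the two-sided bound \eqref{3.bueps}, valid for $\eps$ below some $\eps_0 \in (0, 1/2)$, to obtain constants $0 < c_1 \le c_2 < \infty$ independent of $\eps$ such that $c_1 \le \rho_i^\eps \le c_2$ in $Q_T$ for all $\eps \in (0, \eps_0)$. This settles the $L^\infty$ bound and the positive lower bound simultaneously. The (non-uniform) $W_2^1$ regularity follows from $w \in W_2^1(Q_T)$ (Proposition \ref{prop.v}) together with the classical smoothness of $u^\eps$ for each fixed $\eps$.

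\textbf{Stage 2 (strong $L^2$ compactness).} Banach--Alaoglu gives a subsequence with $\rho^\eps \rightharpoonup^* \rho$ and $u^\eps \rightharpoonup^* u$ in $L^\infty(Q_T)$-weak-$*$; since $w$ is fixed, $\rho = wu$. Rewriting $\pa_t u^\eps + v_\eps \cdot \na u^\eps = 0$ as $\pa_t u^\eps + \diver(u^\eps v_\eps) - u^\eps \diver v_\eps = 0$ and using the strong convergences $v_\eps \to v$ in $L^2(Q_T)$ and $\diver v_\eps \to \diver v$ in $L^1(Q_T)$ from Lemma \ref{lem.v}, I pass to the distributional limit so that $u$ solves $\pa_t u + v \cdot \na u = 0$ with $u(0) = u^0$. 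To upgrade weak to strong convergence I invoke the stability theory for renormalized solutions of transport equations in the DiPerna--Lions/Ambrosio framework. The hypotheses needed are exactly those supplied by Proposition \ref{prop.v}: the local Sobolev regularity $v \in L^{4/3}(0,T; H^1(\Omega'))$ for every $\Omega'$ compactly contained in $\Omega$, the divergence control $\diver v \in L^{2-2/q}(Q_T)$, the tangential behaviour on $\pa\Omega$ captured by the third bullet of Proposition \ref{prop.v} via Lemma \ref{lem.approx}, together with $u^{0,\eps} \to u^0$ in $L^2$. This yields uniqueness of the limit renormalized solution $u$ and strong $L^p$ convergence $u^\eps \to u$ for every $p < \infty$; multiplying by the pinched $w$ gives $\rho^\eps \to \rho$ strongly in $L^2(Q_T; \R^N)$.

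\textbf{Stage 3 (passing to the renormalized limit).} The chain rule applied to $\pa_t u_i^\eps + v_\eps \cdot \na u_i^\eps = 0$, combined with $\pa_t w + \diver(wv) = 0$, yields the pointwise identity
$$
\pa_t b(\rho^\eps) + \diver(b(\rho^\eps) v_\eps) + \bigl(b'(\rho^\eps) \cdot \rho^\eps - b(\rho^\eps)\bigr)\diver v_\eps = \frac{b'(\rho^\eps) \cdot \rho^\eps}{w}\, \diver\bigl(w(v_\eps - v)\bigr)
$$
for any $b \in C^1(\R^N)$. Testing against $\zeta \in C_0^1(\overline{Q}_T)$ with $\zeta(T) = 0$ and integrating by parts gives an approximate weak renormalized equation. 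The right-hand side is an error term that vanishes in the limit: splitting $\diver(w(v_\eps - v)) = (v_\eps - v) \cdot \na w + w(\diver v_\eps - \diver v)$, the first summand tends to $0$ in $L^1(Q_T)$ by Cauchy--Schwarz from $v_\eps \to v$ in $L^2$ and $\na w \in L^\infty(0,T; L^2(\Omega))$, while the second tends to $0$ in $L^1$ by $\diver v_\eps \to \diver v$ in $L^1$ and $w \in L^\infty$; the bounded prefactor $b'(\rho^\eps) \cdot \rho^\eps / w$ makes the whole right-hand side vanish against any $\zeta$. Each term on the left passes to the limit: strong $L^2$ convergence of $\rho^\eps$ (plus the $L^\infty$ bounds and continuity of $b,b'$) gives $b(\rho^\eps) \to b(\rho)$ and $b'(\rho^\eps) \cdot \rho^\eps \to b'(\rho) \cdot \rho$ strongly in $L^2$, which pairs with $v_\eps \to v$ in $L^2$ and $\diver v_\eps \to \diver v$ in $L^1$ to identify every product; the initial term is handled by $\rho^{0,\eps} \to \rho^0$ in $L^2$. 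The resulting identity is exactly the weak renormalized formulation stated in the proposition.

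\textbf{Main obstacle.} The delicate point is Stage 2. Classical DiPerna--Lions uniqueness of renormalized solutions requires $\diver v \in L^1(0,T; L^\infty)$, but here only $\diver v \in L^{2-2/q}(Q_T)$ is available. The argument must therefore rely on the Sobolev regularity of $v$ itself (not merely on its divergence), on the energy bound $\na w \in L^\infty(0,T; L^2(\Omega))$, and on the tangential boundary information from Lemma \ref{lem.approx}, in order to carry the commutator/renormalization estimates through in this weaker setting.
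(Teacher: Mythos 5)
Your architecture differs from the paper's in a way that matters. You renormalize the \emph{approximate} equations for $\rho^\eps=wu^\eps$ and then pass to the limit $\eps\to0$ in the nonlinear identity; this forces you to identify the weak limits of $b(\rho^\eps)$ and $b'(\rho^\eps)\cdot\rho^\eps$, which is only possible if $\rho^\eps\to\rho$ strongly (or a.e.). The paper instead passes to the limit first in the \emph{linear} weak formulation of $\pa_t\rho_i^\eps+\diver(\rho_i^\eps v_\eps)=r_i^\eps$ — for which weak-$*$ convergence of $\rho^\eps$ in $L^\infty$ paired with the strong convergence $v_\eps\to v$ in $L^2$ and $\|r^\eps\|_{L^1}\to0$ suffices — and only \emph{then} renormalizes the limit equation, by localizing with the cut-offs $\phi_m$ of Lemma \ref{lem.approx} (so that the renormalization theorem \cite[Theorem 10.29]{FeNo09} applies with $v\in L^{4/3}(0,T;H^1_{\rm loc})$ and $\rho\in L^\infty$, no boundary trace of $v$ needed) and letting $m\to\infty$ using $v\cdot\na\phi_m\to0$ in $L^1(Q_T)$. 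The paper's route never requires strong convergence of $\rho^\eps$, nor any stability or uniqueness statement for the transport equation.

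This is where the gap lies: your Stage 2 is asserted, not proved, and it is load-bearing for Stage 3. You correctly observe that the DiPerna--Lions/Ambrosio stability and uniqueness theory you invoke requires $\diver v\in L^1(0,T;L^\infty(\Omega))$, whereas here only $\diver v\in L^{2-2/q}(Q_T)$ with $q\in(3,6]$ is available; the standard substitute arguments also fail. For instance, the usual proof of strong convergence — show that the weak-$*$ limit $\overline{\rho^2}$ and $\rho^2$ both solve $\pa_t y+\diver(yv)+y\diver v=0$ and conclude $\overline{\rho^2}=\rho^2$ by Gronwall — needs precisely the missing $L^1_tL^\infty_x$ control of $\diver v$ to close the Gronwall inequality (the cancellation $b'(s)s-b(s)=0$ that rescues the paper's uniqueness argument for $z=w-\Lambda(\rho)$ is special to positively homogeneous $b$ and does not help for $b(s)=s^2$). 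Saying the argument ``must rely on the Sobolev regularity of $v$ itself'' names the difficulty without resolving it. Without Stage 2, the products $b(\rho^\eps)v_\eps$ and $(b'(\rho^\eps)\cdot\rho^\eps-b(\rho^\eps))\diver v_\eps$ in Stage 3 cannot be passed to the limit, so the renormalized formulation is not obtained. (Your error-term estimate in Stage 3 and the boundary handling via $v_\eps\cdot\nu=0$ are fine; the proposal should be restructured along the paper's ``limit first, renormalize after'' order to avoid the unproved compactness step.)
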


\begin{proof}
We know from Proposition \ref{prop.v} that $w\in W_2^1(Q_T)$ and from
\eqref{3.ueps} that $u^\eps\in C^1(\overline\Omega;\R^N)$, which shows that
$\rho^\eps=wu^\eps\in H^1(\Omega;\R^N)$. Moreover, 
since $w$ satisfies the uniform lower 
and upper bounds \eqref{2.Lambda} and $u^\eps$ satisfies the uniform bounds 
\eqref{3.bueps}, also $\rho^\eps_i$ is uniformly bounded from below and above.
At time $t=0$, we have
\begin{equation}\label{3.rhoeps0}
  \rho_i^\eps(0) = w^0u_i^{0,\eps} 
	= \Lambda(\rho^0)\frac{\rho_i^{0,\eps}}{\Lambda(\rho^{0,\eps})} 
	=: \widetilde{\rho}_i^{0,\eps},
\end{equation}
and the strong convergence $\rho^{0,\eps}\to\rho^0$ in $L^1(\Omega;\R^N)$
implies that $\rho_i^\eps(0)\to \rho^0$ strongly in $L^1(\Omega;\R^N)$. 
As $u^\eps$ solves \eqref{3.ueps} and $w$ solves $\pa_t w + \diver(vw) = 0$, 
it follows that $\rho_i^\eps=wu_i^\eps$ solves
\begin{align*}
  \pa_t\rho_i^\eps + v^\eps\cdot\na\rho_i^\eps
	&= w\big(\pa_t u_i^\eps + v^\eps\cdot\na u_i^\eps\big)
	+ u_i^\eps\big(\pa_t w + v^\eps\cdot\na w\big) \\
	&= u_i^\eps\big(\pa_t w + (v^\eps-v)\cdot\na w + \diver(vw) - w\diver v\big) \\
	&= u_i^\eps(v^\eps-v)\cdot\na w - u_i^\eps w\diver v
	= u_i^\eps\diver((v^\eps-v)w) - u_i^\eps w\diver v^\eps \\
	&= u_i^\eps\diver((v^\eps-v)w) - \rho_i^\eps\diver v^\eps.
\end{align*}
In view of the regularity $w\in W_2^1(Q_T)$, $u^\eps\in C^1(\overline{Q}_T;\R^N)$,
and $v\in L^{4/3}(0,T;H^1_{\rm loc}(\Omega))$ (see Proposition \ref{prop.v}),
these calculations make sense a.e.\ in $\Omega'\times(0,T)$ for domains
$\Omega'$ compactly embedded in $\Omega$. We infer that
$$
  \pa_t\rho_i^\eps + \diver(\rho_i^\eps v^\eps) = r_i^\eps 
	:= u_i^\eps\diver((v^\eps-v)w).
$$

Using $\psi\in C_0^1(\overline\Omega\times[0,T);\R^N)$ as a test function
and integrating by parts, we see that
\begin{align}
  -\int_0^T&\int_\Omega\rho^\eps\cdot\pa_t\psi dxdt
	- \int_0^T\int_\Omega(\rho^\eps)^\top\na\psi v^\eps dxdt \nonumber \\
	&= \int_\Omega\widetilde\rho^{0,\eps}(x)\cdot\psi(x,0)dx
	+ \int_0^T\int_\Omega r^\eps\cdot\psi dxdt, \label{3.rhoeps}
\end{align}
where $\widetilde\rho_i^{0,\eps}$ is defined in \eqref{3.rhoeps0}. We wish to
pass to the limit $\eps\to 0$. We deduce from H\"older's inequality and Lemma 
\ref{lem.v} that
\begin{align*}
  \|r^\eps\|_{L^1(Q_T)}
	&= \|\rho^\eps\diver(v^\eps-v) + u^\eps(v^\eps-v)\cdot\na w\|_{L^1(Q_T)} \\
	&\le \|\rho^\eps\|_{L^\infty(Q_T)}\|\diver(v^\eps-v)\|_{L^1(Q_T)}
	+ \|u^\eps\|_{L^\infty(Q_T)}\|v^\eps-v\|_{L^2(Q_T)}\|\na w\|_{L^2(Q_T)} \\
	&\to 0\quad\mbox{as }\eps\to 0.
\end{align*}
Since $(\rho^\eps)$ is uniformly bounded in $L^\infty(Q_T)$, there exists a 
subsequence of $(\rho^\eps)$ which is not relabeled such that
$\rho^\eps\rightharpoonup^*\rho$ weakly* in $L^\infty(0,T;L^\infty(\Omega))$ 
and $\rho\in L^\infty(Q_T)$
satisfies $\rho_i\ge c_1$ in $Q_T$, $i=1,\ldots,N$. 
Thus, taking into account the convergence
in Lemma \ref{lem.v}, $\rho_i^\eps v_j^\eps\rightharpoonup \rho_i v_j$ weakly
in $L^2(Q_T)$. In view of $\rho_i^\eps(0)\to \rho^0$ strongly in $L^2(\Omega;\R^N)$,
the limit $\eps\to 0$ in \eqref{3.rhoeps} leads to
\begin{equation}\label{3.rho}
  -\int_0^T\int_\Omega\rho\cdot\pa_t\psi dxdt
	- \int_0^T\int_\Omega\rho^\top\na\psi v dxdt
	= \int_\Omega\rho^0(x)\cdot\psi(x,0)dx
\end{equation}
for all $\psi\in C_0^1(\overline\Omega\times[0,T);\R^N)$, recalling that
$v=-\kappa G'(w)\na w$.

It remains to show that $\Lambda(\rho)=w$. To this end, we rely on techniques
of renormalization for the continuity equation. Let $\Omega'$ be compactly embedded
in $\Omega$ and let $\phi_m$ be the function constructed in Lemma \ref{lem.approx}
satisfying $\phi_m=1$ in $\Omega'$. (This is possible since $\phi_m\to 1$ locally
uniformly in $\Omega$ as $m\to 0$.) The function $\widetilde\rho:=\rho\phi_m$
is nonnegative and compactly supported in $\Omega\times[0,T]$. Replacing $\psi$ by
$\psi\phi_m$ in \eqref{3.rho} yields
\begin{align*}
  -\int_0^T&\int_\Omega\widetilde\rho\cdot\pa_t\psi dxdt 
	- \int_0^T\int_\Omega\widetilde\rho^\top \na\psi v dxdt \\
	&= \int_\Omega\rho^0(x)\phi_m(x)\cdot\psi(x,0)dx 
	+ \int_0^T\int_\Omega(\rho\cdot\psi)(v\cdot\na\phi_m)dxdt.
\end{align*}
This is the weak formulation of
$$
  \pa_t\widetilde\rho + \diver(\widetilde \rho v) = \rho(v\cdot\na\phi_m)
	\quad\mbox{in }\mathcal{D}'(Q_T).
$$
In view of the regularity $\widetilde\rho\in L^\infty(Q_T)$, 
$v\in L^{4/3}(0,T;H^1_{\rm loc}(\Omega))$, and $\rho(v\cdot\na\phi_m)
\in L^{4/3}(0,T;$ $L^2_{\rm loc}(\Omega))$, we can apply the theory of renormalized
solutions to the continuity equation (see, e.g., \cite[Theorem 10.29]{FeNo09})
to conclude that
$$
  \pa_t b(\widetilde\rho) + \diver(b(\widetilde\rho)v)
	+ \big(b'(\widetilde\rho)\cdot\widetilde\rho-b(\widetilde\rho)\big)\diver v
	= (\rho \cdot b^{\prime}(\widetilde{\rho}))(v\cdot\na\phi_m)
	\quad\mbox{in }\mathcal{D}'(Q_T)
$$
for any $b\in C^1(\R_{+,0}^N)\cap W^{1,\infty}(\R_+^N)$. Using the regularity 
$\diver v\in L^{2-2/q}(Q_T)$ for $3 < q\le 6$ proved in Proposition
\ref{prop.v}, we can formulate this identity in the weak form
\begin{align}
  -\int_0^T&\int_\Omega b(\rho\phi_m)\pa_t\zeta dxdt
	- \int_0^T\int_\Omega b(\rho\phi_m)v\cdot\na\zeta dxdt \nonumber \\
	&= \int_\Omega b(\rho^0(x)\phi_m(x))\zeta(x,0)dx 
	- \int_0^T\int_\Omega\big(b'(\rho\phi_m)\cdot\rho\phi_m
	-b(\rho\phi_m)\big)(\diver v)\zeta dxdt \nonumber \\
	& + \int_{0}^T \int_{\Omega}\rho \cdot b^{\prime}(\widetilde{\rho}) 
	(v\cdot\na\phi_m) \zeta dxdt \label{3.tilderho}
\end{align}
for all $\zeta\in C^1(\overline{Q}_T)$ such that $\zeta(T)=0$. We know that
$\phi_m\to 1$ locally uniformly in $\Omega$ as $m\to\infty$, 
by Lemma \ref{lem.approx}, and 
$v\cdot\na\phi_m\to 0$ strongly in $L^1(Q_T)$, by Proposition \ref{prop.v}.
Thus, passing to the limit $m\to\infty$ in \eqref{3.tilderho}, we obtain
\begin{align}
  -\int_0^T&\int_\Omega b(\rho)\pa_t\zeta dxdt
	- \int_0^T\int_\Omega b(\rho)v\cdot\na\zeta dxdt \nonumber \\
	&= \int_\Omega b(\rho^0(x))\zeta(x,0)dx 
	- \int_0^T\int_\Omega\big(b'(\rho)\cdot\rho
	-b(\rho)\big)(\diver v)\zeta dxdt. \label{3.weakrho}
\end{align}

Finally, we verify that $\rho$ is in fact a weak solution. For this, we need
to prove that $w=\Lambda(\rho)$. Since $\Lambda$ is continuously differentiable
and the range of $\rho$ is contained in a compact subset of $\R_{+}^N$, we can choose
a $C^1$ function $b$ which is equal to $\Lambda$ on this range. Thus, $b=\Lambda$
is admissible in \eqref{3.weakrho}. Then we deduce from the positive homogeneity
of $\Lambda$, i.e.\ $\Lambda'(\rho)\cdot\rho-\Lambda(\rho)=0$, that
$$
  -\int_0^T\int_\Omega\Lambda(\rho)\pa_t\zeta dxdt
	- \int_0^T\int_\Omega\Lambda(\rho)v\cdot\na\zeta dxdt
	= \int_\Omega\Lambda(\rho^0(x))\zeta(x,0)dx
$$
for all $\zeta\in C^1(\overline{Q}_T)$ such that $\zeta(T)=0$. The function
$w$ is a solution to \eqref{1.w}, so $z:=w-\Lambda(\rho)$ satisfies 
$$
  -\int_0^T\int_\Omega z\pa_t\zeta dxdt - \int_0^T\int_\Omega zv\cdot\na\zeta dxdt
	= \int_0^T\int_\Omega z(x,0)\zeta(x,0)dx = 0.
$$
This means that $\pa_t z + \diver(zv)=0$ in $\mathcal{D}'(Q_T)$, $z(0)=0$ in
$\Omega$. Applying renormalization theory again,
we see that $z=0$ in $Q_T$. Hence, $w=\Lambda(\rho)$, and $\rho$ is a weak solution 
to \eqref{1.cont}.
\end{proof}

%%%%%%%%%%%%%%%%%%%%%%%%%%%%%%%%%%%%%%%%%%%%%%%%%%%%%%%%%%%%%%%%%%%%%%%%%%%%%%%

\section{Extensions and open problems}\label{sec.open}

In this paper, we have deliberately kept the technicality rather low to highlight
the key ideas. In this section, we briefly point out some direct extensions of 
our results as well as some open problems.

\subsection{Reaction terms} 

Let us consider the mass balance equation \eqref{1.cont} with nonvanishing
right-hand sides $r_i = r_i(\rho)$ for $i=1,\ldots,N$, modelling some bulk source 
of mass, possibly from chemical reactions. 
Under suitable technical restrictions on the data, we expect the existence and 
uniqueness of a classical solution as in Theorem \ref{thm.class}. We briefly 
sketch the arguments.

For the variables $w :=\Lambda(\rho)$ and $u := \rho/\Lambda(\rho)$ 
introduced in Section \ref{keyideas}, we define 
\begin{equation}\label{4.f}
  f(w,u) := r(wu) \cdot \Lambda'(u), \quad 
	g_i(w,u) := \frac{1}{w}(r_i(wu) - u_if(w,u)), \quad i=1,\ldots,N.
\end{equation}
Equations \eqref{1.w} for $w$ and \eqref{1.u} for $u$ change to
\begin{align}\label{4.reacw}
  \pa_t w - \diver(\kappa(w) w G'(w)\na w) = f(w,u) 
	&\quad\mbox{in }Q_T ,\\
  \dot{u}_i =  g_i(w,u) &\quad\mbox{in }Q_T,\ i=1,\ldots,N, \label{4.reacu}
\end{align}
where $\dot{u} = \partial_t u + v \cdot \nabla u$ denotes the material derivative 
with velocity $v = -\kappa(w)\nabla G(w)$.
  
As exposed in our key ideas, the main ingredient to prove the well-posedness of such
systems is the maximum principle for \eqref{4.reacw}. We will not discuss
all possible growth conditions needed for $\Lambda$, $\kappa$, $G$ and $r$ such that 
the weak maximum principle in \eqref{4.reacw} is valid. Instead, we only discuss
the example of a sufficiently smooth field $r \in C^{1}(\R^N_{+,0})$ under
the following assumptions:

\begin{labeling}{(A44)}
\item[(A5)] There exist $0 < w_* < w^* < + \infty$ such for all $u \in \R^N_+$ 
satisfying $\Lambda(u) \leq 1$, we have $f(w,u) \geq 0$ for all $w \leq w_*$ 
and $f(w,u) \leq 0$ for all $w \geq w^*$;
\item[(A6)] The modified functions $\widetilde{g}_i(w,u) := (1/w)(r_i(wu) 
- u_if(wu)/\Lambda(u))$ are quasi-positive for $i=1,\ldots,N$, i.e., for all 
$w \in \R^N_+$, we have $\widetilde{g}_i(w,u) \geq 0$ whenever $u_i = 0$. 
\end{labeling} 

We claim that under Assumptions (A1)--(A6), there exist classical solutions
to \eqref{4.reacw}, \eqref{4.reacu} with initial and boundary conditions
\eqref{1.bicw} and \eqref{1.u}. We use a fixed-point argument. For this,
let $\bar{u} \in L^{\infty}(Q_T;\R^N_{+,0})$ be given such that
$\Lambda(\bar{u}) \leq 1$. Then there exists a weak solution $w$ to
\begin{align*}
  & \pa_t w - \diver(\kappa(w)wG'(w)\na w)=f(w,\bar{u})\quad\mbox{in }Q_T, \\
	& \na w\cdot\nu=0\quad\mbox{on }\pa\Omega,\ t>0, \quad w(0)=w^0\quad\mbox{in }\Omega.
\end{align*}
Using the test functions $(w - w^*)^+=\max\{0,w-w^*\}$ and 
$(w-w_*)^-=\min\{0,w-w_*\}$, Assumption (A5) yields
lower and upper bounds for $w$. The Lipschitz continuity of $f$ allows us to
conclude the uniqueness of the solution.

Next, we use parabolic regularity for quasilinear equations to derive an
intermediate bound for $\na w$. For Neumann problems, the global gradient bound 
for $\nabla w$ in $L^{\infty}(Q_T)$ is proved, for instance, in 
\cite[Chapter XIII, Section 2.2]{Lie96}. 
This bound only requires an $L^{\infty}$ bound on the source term 
$f(w,\bar{u})$ in $Q_T$. Using Assumption (A4) on $\Lambda$ and the bounds 
$r_0|\bar{u}| \leq \Lambda(\bar{u}) \leq 1$, the source term can be estimated in 
$L^{\infty}$ independently on $\bar{u}$ (and, of course, independently of $w$). 

Arguing like in \cite{BHIM12}, we invoke linear parabolic theory: We obtain 
the H\"older continuity of $w$ \cite[Chapter V, Theorem 7.1]{LSU68} and, for every 
$1\leq p < \infty$, the estimates 
$$
  \|w\|_{W^{2,1}_p(Q_T} \leq C\big(\|w_0\|_{W^{2-2/p,p}(\Omega)} 
  + \|f(w,\bar{u})\|_{L^{p}(Q_T)}\big),
$$
where $W^{2,1}_p(Q_T)=L^p(0,T;W^{2,p}(\Omega))\cap W^{1,p}(0,T;L^p(\Omega))$
\cite[Chapter 5, \S 9]{LSU68}.
In three space dimensions, the embedding properties of $W^{2,1}_p(Q_T)$ 
yield a bound for $\nabla w$ in $C^{\beta,\beta/2}(\overline{Q}_T)$ with 
$\beta = 1-5/p>0$ for $p>5$. 

Now, we can introduce the fractions. It is possible to solve globally the 
characteristic differential equations
$$
  \Phi'(s;t,x) = v(\Phi(s;t,x),s),\quad s\in(0,T),\quad \Phi(t;t,x) = x.
$$
For all $s\in[0,T]$, the map $(x,t)\mapsto \Phi(s;x,t)$ belongs to 
$C^{1+\beta,1+\beta/2}(\overline{Q}_T)$. 
Since the map $u \mapsto \widetilde{g}_i(w,u)$ is Lipschitz continuous, we can 
solve the ODEs
\begin{equation}\label{4.ode}
  \dot{u}_i = \widetilde{g}_i(w,u), \quad t>0,\ i=1,\ldots,N.
\end{equation}
Thanks to Assumption (A6), the solutions $u_i$ remain globally positive. 
The definition of $\widetilde{g}$ in Assumption (A6) implies that 
$$
  \sum_{i=1}^N \widetilde{g}_i(w,u)\frac{\pa\Lambda}{\pa\rho_i}(u) 
	= \frac{1}{w}\bigg(\sum_{i=1}^N  r_i(wu) \frac{\pa\Lambda}{\pa\rho_i}(u) 
	- \frac{\Lambda(u)}{\Lambda(u)}f(w,u)\bigg) = 0.
$$
Thus, the solutions to \eqref{4.ode} satisfy 
$\dot{\Lambda}(u) = \sum_{i=1}^N \dot{u}_i\pa\Lambda/\pa\rho_i=0$, and 
$\Lambda(u) = 1$ is conserved. 

It is readily seen that $\bar{u} \mapsto u$ maps the set 
$M := \{u \in L^{\infty}(Q_T;\R^N) : 0 \leq u,\,\Lambda(u) \leq 1\}$ into itself. 
Moreover, the solution formula for \eqref{4.ode},
$$
  u(x,t) = u^0(\Phi(0;x,t)) + \int_{0}^t \widetilde{g}\big(w(\Phi(s;x,t),s),
	u(\Phi(s;x,t),s)\big)ds,
$$
can be used to prove a bound for $u$ in $C^1(\overline{Q}_T)$. Thus, 
$\bar{u} \mapsto u$ is compact in $L^{\infty}(Q_T)$, and the Schauder fixed-point 
theorem gives the existence of a fixed point. 

This argument can be extended to weak solutions and to more general assumptions
than (A5)--(A6), but we leave the details to the reader.

%%%%%%%%%%%%%%%%%

\subsection{External forces} 

A second problem is the presence of external forces in \eqref{1.v}. 
In the simplest case, the flow is subject to gravity, yielding
\begin{equation}\label{4.v}
  v = \kappa(-\nabla p + \rho_{\text{tot}}\vec{g}), \quad 
	\rho_{\text{tot}} = \sum_{i=1}^N \rho_i,
\end{equation}
where $\rho_{\text{tot}}$ is the total mass density and $\vec{g}$ is the 
constant vector of earth gravitational acceleration. In terms of the variables $w$
and $u$, we find that $\rho_{\text{tot}} = w\sum_{i=1}^N u_i$. After the change of 
variables, equation \eqref{1.w} for $w := \Lambda(\rho)$ becomes
\begin{equation}\label{4.grav}
  \pa_t w - \diver\bigg(\kappa w \bigg(G'(w)\na w - \sum_{i=1}^N u_iw\vec{g}
	\bigg)\bigg) = 0 \quad\mbox{in }Q_T. 
\end{equation}
The presence of the contribution $\sum_{i=1}^N u_i$ in the flux prevents the 
higher regularity of $w$. We therefore expect that the extension to the case 
of nontrivial external forces is a challenging open problem. 

However, if the sum of initial fractions is constant, 
$\sum_{i=1}^N \rho_i^0/\Lambda(\rho^0) = C_0$ in $\Omega$ for some $C_0>0$,
and if the fractions are simply transported, we may replace \eqref{4.grav} by 
$\pa_t w - \diver(\kappa w(G'(w)\na w - C_0w\vec{g}) ) = 0$, yielding
a problem which can be treated by the previous arguments.

%%%%%%%%%%%%%%%%%%%%%%

\subsection{Other boundary conditions}\label{sec.bc}

The last problem that we would like to discuss is the choice of boundary conditions. 
Instead of \eqref{1.bic}, we might impose the Dirichlet condition $p = p_0$ on 
$\pa\Omega \times (0,T)$, where $p_0$ is a given function. This type of pressure 
boundary condition corresponds to a free in-outflow problem. For instance, we can 
think of the domain $\Omega$ as a fixed control region in a larger environment 
occupied by the fluid. 

In formulating this problem, we realize that equations \eqref{1.cont} are not 
well posed on $\Omega$. In the absence of the impermeability condition \eqref{1.bic}, 
the trajectories of the characteristics are clearly not confined to $\Omega$. In 
order to solve this kind of boundary-value problems, we need a representation of 
the flow outside of $\Omega$. Mathematically, we need an extension operator $E$ 
which, for each velocity field given on $\pa\Omega$, provides its extension $E(v)$ 
to a larger region $\widetilde{\Omega}$. Moreover, as the trajectories are not 
confined to $\Omega$, the initial state needs to be known in the larger region 
$\widetilde{\Omega}$.

The strategy to solve the free flow problem is again to solve the equations
\begin{equation}\label{4.freeflow}
  \pa_t w - \diver(\kappa w G'(w)\na w ) = 0 \quad\mbox{in }Q_T, \quad 
	w = G^{-1}(p_0) \quad\mbox{on } \pa\Omega,\ t>0,
\end{equation}
with initial conditions $w(0) = w^0$ in $\Omega$. 
We solve the differential equations for the characteristics in the larger domain 
$\widetilde{\Omega} \times (0,T)$ with the extended velocity field 
$v = E(-\kappa G^{\prime}(w) \na w)$. Since the flow is confined to a bounded region, 
it is natural to assume that $E(v)$ possesses compact support, i.e., it vanishes 
uniformly outside of the domain $\widetilde{\Omega}$.

Then, we transport the fractions via $u(x,t) = u^0(\Phi(0;x,t))$ for 
$x \in \widetilde{\Omega}$ and $t > 0$. It is readily verified that 
$\rho_i(x,t) := (w u_i)(x,t)$ solves $\pa_t \rho_i + \diver(\rho_i v) = 0$ in 
$Q_T$ and $\rho_i(0,x) = \rho_i^0(x)$ for $x \in \Omega$. 

We will discuss these ideas in more detail in an upcoming publication devoted 
to the optimal control of this type of flow problems.

%%%%%%%%%%%%%%%%%%%%%%%%%%%%%%%%%%%%%%%%%%%%%%%%%%%%%%%%%%%%%%%%%%%%%%%%%%%%%

\end{document}